\newtheorem*{theorem*}{Theorem}
\newtheorem{theorem}{Theorem}[section]
\newtheorem{lemma}[theorem]{Lemma}
\newtheorem{proposition}[theorem]{Proposition}
\newtheorem{corollary}[theorem]{Corollary}
\theoremstyle{definition}
\theoremstyle{remark}
\newtheorem{remark}[theorem]{Remark}
\numberwithin{equation}{section}
\DeclareMathOperator*{\esssup}{ess\,sup}
\def\R{{\mathbb R}}
\newcommand{\BMO}[0]{\operatorname{BMO}}
\newcommand{\abs}[1]{|#1|}
\newcommand{\la}{\lambda}
\newcommand{\eps}{\varepsilon}
\newcommand{\Z}{\mathbb Z}
\newcommand{\osc}{\operatorname{osc}}
\newcommand{\strt}[1]{\rule{0pt}{#1}}
\newcommand{\car}[1]{\chi_{\strt{1.5ex}#1}}
\newcommand{\norm}[1]{\mbox{$\left\| #1 \right\|$}}
\def\Xint#1{\mathchoice
  {\XXint\displaystyle\textstyle{#1}}%
  {\XXint\textstyle\scriptstyle{#1}}%
  {\XXint\scriptstyle\scriptscriptstyle{#1}}%
  {\XXint\scriptscriptstyle\scriptscriptstyle{#1}}%
  \!\int}
\def\XXint#1#2#3{{\setbox0=\hbox{$#1{#2#3}{\int}$}
    \vcenter{\hbox{$#2#3$}}\kern-.5\wd0}}
\def\avgint{\Xint-}
\newcommand{\vertiii}[1]{{\left\vert\kern-0.25ex\left\vert\kern-0.25ex\left\vert #1 
    \right\vert\kern-0.25ex\right\vert\kern-0.25ex\right\vert}}
\numberwithin{equation}{section}
\def\Item$#1${\item $\displaystyle#1$\hfill}
\begin{document}

\title[Extensions of John--Nirenberg and applications]{Extensions of the John--Nirenberg theorem and applications}

\author[J. Canto]{Javier Canto}
\address[Javier Canto] {BCAM \textendash  Basque Center for Applied Mathematics, Bilbao, Spain}
\email{jcanto@bcamath.org}

\author[C. P\'erez]{Carlos P\'erez}
\address[Carlos P\'erez]{ Department of Mathematics, University of the Basque Country, IKERBASQUE 
(Basque Foundation for Science) and
BCAM \textendash  Basque Center for Applied Mathematics, Bilbao, Spain}
\email{cperez@bcamath.org}

\thanks{Both authors are supported by the Spanish Ministry of Economy and Competitiveness, MTM2017-82160-C2-2-P and SEV-2017-0718 and by the Basque Government BERC 2018-2021 and IT1247-19.
J.C. is also supported by Basque Government through ``Ayuda para la formaci\'on de personal investigador no doctor".}



\begin{abstract}
The John--Nirenberg theorem states that functions of bounded mean oscillation are exponentially integrable. In this article we give two extensions of this theorem. The first one relates the dyadic maximal function to the sharp maximal function of Fefferman--Stein, while the second one concerns local weighted mean oscillations, generalizing a result of Muckenhoupt and Wheeden.   Applications to the context of generalized Poincar\'e type inequalities and to the context of the $C_p$ class of weights are given. Extensions to the case of polynomial $\BMO$ type spaces are also given.
\end{abstract}

\maketitle


\section{Introduction and main results}

The classical John--Nirenberg (JN) theorem \cite{JN} states that any function of bounded mean oscillation is locally exponentially integrable, see for example \cite{GCRdF}. More precisely, we have the inequality
\begin{equation}\label{True-John-Nirenberg}
|\{ x\in Q: |f(x)-f_Q| > t \}| \leq c_1 e^{-c_2 t / \| f\|_{\BMO}}|Q|.
\end{equation}

The main purpose of this work is to provide two extensions of the classical JN theorem for $\BMO$ functions. 
The first extension constitutes an improvement of a result of Karagulyan \cite{Karagulyan}, which is in turn a more precise version of the classical Fefferman--Stein inequalities relating the Hardy--Littlewood and the sharp maximal functions.
The second extension is an improvement of some classical estimates by Muckenhoupt and Wheeden \cite{MW} concerning weighted local mean oscillations. These estimates were already discussed in the recent work \cite{OPRRR19} in a more restrictive setting.

	These two extensions, although different a priori, are obtained by a similar method, using some ideas from the recent work \cite{PR-Poincare}.

\subsection{Improving Karagulyan's estimate}

The first extension is motivated by the work of Karagulyan \cite{Karagulyan}, who already provided an extension of the JN theorem. We improve this interesting result by providing a different more flexible proof with several different advantages. 
However,
this first extension is also inspired by the recent work \cite{PR-Poincare}, where a different approach to the main results from \cite{FKS} concerning degenerate Poincar\'e--Sobolev inequalities is found. Using this alternative approach, we give a direct proof of the classical JN theorem (see the Appendix).

We obtain two different consequences of this improvement of the JN theorem. Firstly, we derive some degenerate Poincar\'e--Sobolev endpoint inequalities not available from the methods in \cite{PR-Poincare}.
Secondly, this improvement will be applied within the context of the $C_p$ class of weights (see Section \ref{prelim}) solving some questions left open in \cite{CCF}.

To establish this result we recall the sharp maximal function introduced by C. Fefferman and E. Stein. It is defined by
\begin{equation*}
M^\sharp h(x) = \sup_{x \in R} \frac{1}{|R|}\int_R |h-h_R|,
\end{equation*}
where the supremum is taken over all cubes $R$ that contain the point $x$, and $h_R= \frac{1}{|R|}\int_Rh$ denotes the average of $h$ over $R$.
Karagulyan proved in \cite{Karagulyan} the following interesting exponential decay. If $f$ is an $L^1_{loc}$ function and $B$ a ball in $\R^n$, then
\begin{equation*}
|\{x\in B: \frac{|f(x)-f_B|}{M^\sharp f(x)}>\la\}| \lesssim e^{-c \, \la} |B|.
\end{equation*}
%
 Our first main result, Theorem \ref{Teorema1},  improves this exponential decay in several ways. On one hand, we have the decay for the local maximal function and on the other hand, we obtain weighted estimates. Our method of proof is different from \cite{Karagulyan}.

In order to state our first main result we set the following notation for $r>1$ and for any cube $Q$
\[ 
w_r(Q)= |Q| \left( \avgint_Q w^r\right)^\frac{1}{r} = |Q|^{\frac{1}{r'}} \left( \int_Q w^r\right)^\frac 1r.
\]

\begin{theorem}\label{Teorema1}
Let $f$ be a locally integrable function. Then for any cube $Q$, for any $1\leq p <\infty$ and $1<r<\infty$, the following estimate holds
\begin{equation}\label{John-Nirenberg1}
\left(  \frac{1}{w_r(Q)} \int_Q \left(\frac{M_Q(f-f_Q)(x)}{M^\sharp f(x)}\, \right)^p w(x)dx\right)^\frac{1}{p} \leq c_n \, pr'.
\end{equation}
Hence, if further $w\in A_{\infty}$ we have 
\begin{equation}\label{John-Nirenberg2}
\left(  \frac{1}{w(Q)} \int_Q \left(\frac{M_Q(f-f_Q)(x)}{M^\sharp f(x)}\, \right)^p w(x)dx\right)^\frac{1}{p} \leq c_n \, p\,[w]_{A_{\infty}}.
\end{equation}
\end{theorem}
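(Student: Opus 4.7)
The plan is to establish a weighted exponential-decay estimate
\[
w\big(\{x \in Q : M_Q(f-f_Q)(x) > t \, M^\sharp f(x)\}\big) \le C\, w_r(Q)\, e^{-c t/r'},
\]
and then deduce \eqref{John-Nirenberg1} from it via the layer-cake formula, since $\int_0^\infty p t^{p-1} e^{-c t/r'}\,dt \sim (pr'/c)^p$. The second estimate \eqref{John-Nirenberg2} will follow from \eqref{John-Nirenberg1} by the sharp reverse Hölder inequality for $A_\infty$ weights: choosing $r_w = 1 + c_n/[w]_{A_\infty}$ gives $w_{r_w}(Q) \lesssim w(Q)$ and $(r_w)' \lesssim [w]_{A_\infty}$, so substituting $r=r_w$ produces the claimed $p[w]_{A_\infty}$ constant.

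\textbf{Stopping-cube iteration.} To obtain the Lebesgue exponential decay, I would replace $M_Q$ by its dyadic analogue $M_Q^d$ (losing only a dimensional constant by the standard one-third trick) and then perform a Calderón--Zygmund iteration in the spirit of \cite{PR-Poincare}. Starting from $\mathcal{F}_0 = \{Q\}$, at step $k$ one selects inside each $P \in \mathcal{F}_k$ the maximal dyadic subcubes $P' \subsetneq P$ on which $|f_{P'} - f_P|$ exceeds a large multiple of the local oscillation $\tfrac{1}{|P|}\int_P |f - f_P|$. A standard Chebyshev argument gives $|\bigcup \mathcal{F}_{k+1} \cap P| \le \tfrac{1}{2}|P|$ for each $P \in \mathcal{F}_k$, so the set $\Omega_k := \bigcup \mathcal{F}_k$ satisfies $|\Omega_k| \le 2^{-k}|Q|$. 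The crucial input is the pointwise bound $\tfrac{1}{|P|}\int_P|f - f_P| \le M^\sharp f(y)$ for every $y \in P$: telescoping the jumps of averages along any descending chain $Q \supset P_1 \supset \cdots \supset P_k \ni y$ of stopping cubes converts the geometric scaling of the thresholds into the pointwise bound $M_Q^d(f-f_Q)(x) \lesssim c^k M^\sharp f(x)$ for every $x \notin \Omega_k$, which is exactly the desired Lebesgue exponential decay.

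\textbf{From Lebesgue to the weight.} The weight enters only via Hölder's inequality applied to each level set: for $E \subset Q$,
\[
w(E) \le |E|^{1/r'}\Big(\int_Q w^r\Big)^{1/r} = \Big(\frac{|E|}{|Q|}\Big)^{1/r'} w_r(Q).
\]
Combining this with $|\Omega_k| \le 2^{-k}|Q|$ yields $w(\Omega_k) \le 2^{-k/r'} w_r(Q)$, i.e.\ weighted exponential decay with rate $\sim 1/r'$. Integrating via the layer-cake formula then produces \eqref{John-Nirenberg1}, and the reverse Hölder substitution mentioned above delivers \eqref{John-Nirenberg2}.

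\textbf{Expected main obstacle.} The chief technical point is the stopping-time construction itself. Unlike the classical John--Nirenberg argument, where thresholds are calibrated to the uniform constant $\|f\|_{\BMO}$, here they must be adapted to the \emph{pointwise} function $M^\sharp f(x)$, which varies across $Q$. The telescoping step is precisely what allows the innermost value $M^\sharp f(y)$ to absorb all intermediate oscillations along a chain of nested stopping cubes. Keeping the constants under control so that the final bound scales linearly in $p r'$, rather than in $p^2$ or $(r')^2$, will be the most delicate part.
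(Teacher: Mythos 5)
Your approach is correct in spirit but genuinely different from the paper's. The paper performs a \emph{single} Calder\'on--Zygmund decomposition of $|f-f_Q|/\mathrm{osc}(f,Q)$ at a free height $\lambda>1$, derives the pointwise inequality
\[
\frac{M_Q(f-f_Q)(x)}{M^\sharp f(x)} \le 2^n\lambda + \sum_j \frac{M_{Q_j}(f-f_{Q_j})(x)}{M^\sharp f(x)}\,\chi_{Q_j}(x),
\]
and then takes the $L^p(w)$ norm directly, obtaining a self-similar fixed-point inequality $X\le 2^n\lambda + X\lambda^{-1/(pr')}$ for the supremum $X$ of the left-hand side over dyadic cubes, which is closed by optimizing in $\lambda$ (Lemma~\ref{minimization}) and justified via a truncation to ensure $X<\infty$. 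You instead propose the classical \emph{iterated} stopping-time construction, proving the distributional bound first and integrating via layer-cake. Both strategies are sound; what the paper's route buys is brevity (no induction on generations, no layer-cake integration, the optimization in $\lambda$ is what produces the linear $pr'$ constant), while your route has the advantage of producing the exponential distributional estimate of Corollary~\ref{COROLARIO} directly rather than as a consequence of Proposition~\ref{EXPONENTIAL}, and sidesteps the need to truncate $f$ to justify finiteness of the self-similar quantity.

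Two points in your write-up need repair. First, the pointwise threshold accumulated after $k$ generations is \emph{additive}, not multiplicative: each jump $|f_{P_j}-f_{P_{j-1}}|$ is bounded by a fixed dimensional multiple of $\mathrm{osc}(f,P_{j-1})\le M^\sharp f(y)$, so telescoping gives $M_Q(f-f_Q)(x)\lesssim k\,M^\sharp f(x)$ outside $\Omega_k$, \emph{not} $c^k M^\sharp f(x)$. The linear growth in $k$ combined with $|\Omega_k|\le 2^{-k}|Q|$ is precisely what yields exponential decay; if the threshold truly grew geometrically like $c^k$, the resulting decay would be merely polynomial in $t$ and the final $L^p$ constant would degrade to something exponential in $p$ rather than linear. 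Second, the stopping criterion must be placed on the local averages $\frac{1}{|P'|}\int_{P'}|f-f_P|$ (so that Chebyshev directly gives the measure halving, and maximality controls the jump through the dyadic parent), not on the jumps $|f_{P'}-f_P|$ themselves, which are dominated by but not equal to those averages. Finally, a minor remark: the ``one-third trick'' is unnecessary here, since $M_Q$ in the statement is already the localized \emph{dyadic} maximal operator; no loss of a dimensional constant occurs at that stage.

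Your derivation of \eqref{John-Nirenberg2} from \eqref{John-Nirenberg1} via the sharp reverse H\"older exponent $r_w=1+c_n/[w]_{A_\infty}$ matches the paper exactly, and the H\"older step $w(E)\le(|E|/|Q|)^{1/r'}w_r(Q)$ is the same mechanism the paper uses to obtain $\sum_j w_r(Q_j)\le w_r(Q)\lambda^{-1/r'}$.
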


Here, $M_Q$ is the local dyadic maximal operator over $Q$, see Section \ref{PruebaTeorema1} for the precise definition.

In Section \ref{polynomial} we extend this result to the context of polynomial type $\BMO$. As a direct corollary of Theorem \ref{Teorema1} and Proposition \ref{EXPONENTIAL}, we have the following exponential estimates.

\begin{remark}
We remark that the corresponding result replacing the $L^p$ norm by the (larger) Lorentz norm $L^{p,q}$ with $1\leq q<p$ cannot be proved even in the simplest situation $w=1$ and without $M$. 

\end{remark}

\begin{remark}
 We also remark that the factor $p$ in \eqref{John-Nirenberg1} (or \eqref{John-Nirenberg2}) it is crucial since it yields the exponential type result as follows. 

\end{remark}

\begin{corollary}\label{COROLARIO}
Let $f$ be a locally integrable function. Then we have
\begin{itemize}
\item Improved John--Nirenberg estimate:
\[ \norm{\frac{\displaystyle M_Q(f-f_Q)}{\displaystyle M^\sharp f}}_{\exp L(Q, \frac{wdx}{w(Q)})} \leq c_n\,[w]_{A_{\infty}}
\]
meaning that there exist dimensional constants $c_1,c_2>0$ such that
\begin{equation*}
w\left(\left\{ x\in Q: \frac{M_Q(f-f_Q)}{M^\sharp f(x) }>t \right\}\right) \leq c_1 \,e^{-c_2\, t / [w]_{A_\infty}}\, w(Q), \quad t>0.
\end{equation*}
\item For every cube and $\la, \gamma >0$ we have the following good-$\la$ type inequality
\begin{equation*}   
w\big( \{x\in Q: M_Q(f-f_Q)> \la, M^\sharp f(x) \leq \gamma \la \} \big)  \leq c_1\,{e^{\frac {-c_2}{\gamma[w]_{A_\infty}}}}\, w(Q)
\end{equation*} 
\end{itemize}
\end{corollary}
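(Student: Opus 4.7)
The plan is to derive both items directly from Theorem~\ref{Teorema1}, equation~\eqref{John-Nirenberg2}, via the standard mechanism that converts an $L^p$ bound whose constant is linear in $p$ into exponential integrability (this is precisely what Proposition~\ref{EXPONENTIAL} would encapsulate). Writing
\[
F(x) := \frac{M_Q(f-f_Q)(x)}{M^\sharp f(x)}, \qquad A := c_n\,[w]_{A_\infty},
\]
Theorem~\ref{Teorema1} says $\|F\|_{L^p(Q,\, w\,dx/w(Q))} \le A\,p$ for every $1\le p<\infty$.

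First I would apply Chebyshev's inequality to turn this $L^p$ control into a level-set estimate: for every $t>0$ and every $p\ge 1$,
\[
\frac{w(\{x\in Q : F(x) > t\})}{w(Q)} \;\le\; \frac{1}{t^p\,w(Q)} \int_Q F(x)^p\, w(x)\,dx \;\le\; \Bigl(\frac{Ap}{t}\Bigr)^p.
\]
Next I would optimize in $p$. Choosing $p = t/(Ae)$ whenever $t\ge Ae$ gives $(Ap/t)^p = e^{-p} = e^{-t/(Ae)}$; for $t<Ae$ the trivial bound $w(\{F>t\})/w(Q)\le 1 \le e\cdot e^{-t/(Ae)}$ closes the case. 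Combining, one obtains
\[
w\bigl(\{x\in Q : F(x)>t\}\bigr) \;\le\; c_1\, e^{-c_2 t/[w]_{A_\infty}}\, w(Q), \qquad t>0,
\]
with $c_1 = e$ and $c_2 = 1/(c_n e)$. This is exactly the exponential decay of the first bullet, and rephrasing it as an $\exp L$--norm bound on $Q$ with respect to the measure $w\,dx/w(Q)$ yields the stated improved John--Nirenberg form.

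The good-$\lambda$ estimate in the second bullet is then an immediate consequence of the exponential estimate via set inclusion: if $M_Q(f-f_Q)(x) > \lambda$ and $M^\sharp f(x) \le \gamma \lambda$, then $F(x) > 1/\gamma$, so
\[
\{x\in Q: M_Q(f-f_Q)>\lambda,\; M^\sharp f(x)\le \gamma\lambda\} \;\subseteq\; \{x\in Q: F(x)>1/\gamma\}.
\]
Applying the exponential bound at level $t = 1/\gamma$ delivers the inequality with the factor $e^{-c_2/(\gamma[w]_{A_\infty})}$. There is no real obstacle here: the only nontrivial input is Theorem~\ref{Teorema1}, and the linear-in-$p$ dependence is the \emph{whole} point, since it is what makes the Chebyshev optimization produce genuine exponential (as opposed to polynomial) decay.
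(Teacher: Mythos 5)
Your proof is correct and takes essentially the same route as the paper: Theorem~\ref{Teorema1} provides the $L^p$ bound with linear dependence on $p$, and both items then follow by converting that into exponential decay (the paper invokes Proposition~\ref{EXPONENTIAL}, which uses the exponential moment and Taylor expansion, while you perform the equivalent Chebyshev-plus-optimization in $p$). The good-$\lambda$ inequality via set inclusion is likewise the intended argument.
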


We call this result improved John--Nirenberg estimate because if $w=1$ and $f\in \BMO,$ then $M^\sharp f(x) \leq \|f\|_{\BMO}$ for a.e.  $x$ and, therefore,
$$
\left|\left\{ x\in Q: M_Q(f-f_Q)>t \right\}\right|\leq c_1 \,e^{\frac{-c_2\, t}{\|f\|_{\BMO}}}\, |Q|, \quad t>0.
$$
This implies the JN theorem \eqref{True-John-Nirenberg} by Lebesgue differentiation theorem, because $M_Q(f-f_Q)\geq f-f_Q$ a.e. in $Q.$

{

$\bullet$ {\it Generalized Poincar\'e inequalities: sharp cuantitative $A_{\infty}$ bounds.} 
As a first application of Theorem \ref{Teorema1}, we improve the main result in \cite{FPW98} (at least in the simplest situation of cubes) which at the same time provides a limiting result that could not be treated  in Theorem 1.14  of \cite{PR-Poincare}.

Let $w$ be an $A_\infty$ weight and let $a$ be a functional over cubes of $\mathbb R^n$. We will assume that $a$ satisfies the $D_r(w)$ condition for some $r>1$ as introduced in \cite{FPW98}. More precisely, for every cube $Q$ and every collection $\Lambda$ of pairwise disjoint subcubes of $Q$, the following inequality holds:
\begin{equation}\label{DR-condition}
\sum_{P\in \Lambda} w(P) \, a(P)^r \leq \|a\|^r w(Q) a(Q)^r,
\end{equation}
for some constant $\|a\|>0$ that plays the role of the ``norm" of $a$.

These kind of functionals were studied in relation with self improvement properties of generalized Poincar\'e inequalities in 
\cite{FPW98}, further studied in \cite{MacManus-Perez-98} and more recently improved in \cite{PR-Poincare}. We establish now an new endpoint result in the spirit of Theorem 1.14  in  \cite{PR-Poincare} which was missing since Theorem  \ref{Teorema1} was not available. 

\begin{theorem}\label{Automejora}
Let $w\in A_\infty$ and $a$ a functional satisfying \eqref{DR-condition}. Let $f$ be a locally integrable function such that for every cube $Q$,
\begin{equation}\label{starting-point}
\frac{1}{|Q|}\int_Q |f-f_Q| \leq a(Q).
\end{equation} 
Then, for every cube $Q$,
$$
\| f-f_Q\|_{L^{r,\infty}\big( Q, \frac{w}{w(Q)}\big)} \leq c_n \, r\, [w]_{A_\infty}\,\|a\| \, a(Q).
$$
\end{theorem}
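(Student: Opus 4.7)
The plan is to combine a weak--$L^r$ bound on a local $a$-maximal function (coming from the $D_r(w)$ hypothesis) with the control of $M_Q(f-f_Q)$ by the sharp maximal function supplied by Theorem~\ref{Teorema1} and Corollary~\ref{COROLARIO}, and then iterate a good-$\lambda$ inequality to reach the endpoint weak-$L^r$ space.

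\emph{Step 1 (weak-$L^r$ from $D_r(w)$).} Define $\mathcal A_Q(x)=\sup\{a(R):x\in R,\ R\text{ a dyadic subcube of }Q\}$ on $Q$. The super-level set $\{\mathcal A_Q>t\}$ is the disjoint union of the maximal dyadic subcubes $\{R_j\}$ of $Q$ with $a(R_j)>t$, and applying \eqref{DR-condition} to $\Lambda=\{R_j\}$ yields
\[ t^r\, w(\{\mathcal A_Q>t\})=t^r\sum_j w(R_j)\leq\sum_j w(R_j)\,a(R_j)^r\leq\|a\|^r a(Q)^r w(Q). \]
By hypothesis \eqref{starting-point}, the local dyadic sharp maximal function $\sup_{R\ni x,\,R\subseteq Q\,\mathrm{dyadic}}\tfrac{1}{|R|}\int_R|f-f_R|$ is pointwise bounded by $\mathcal A_Q(x)$ on $Q$, and thus inherits the bound $\|\cdot\|_{L^{r,\infty}(Q, w/w(Q))}\leq\|a\|\,a(Q)$. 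Since the proof of Theorem~\ref{Teorema1} is localized to $Q$ and can be carried out with this local dyadic sharp maximal function in the denominator, we use Theorem~\ref{Teorema1} and Corollary~\ref{COROLARIO} in that local form below.

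\emph{Step 2 (iterative good-$\lambda$).} For $\lambda>0$, Calder\'on--Zygmund decompose $|f-f_Q|$ on $Q$ at level $\lambda$, obtaining disjoint maximal dyadic subcubes $\{Q_k\}$ with $\bigcup_k Q_k=\{M_Q^d(f-f_Q)>\lambda\}$ and $|f_{Q_k}-f_Q|\leq 2^n\lambda$. A short computation shows $M_Q(f-f_Q)(x)\leq M_{Q_k}(f-f_{Q_k})(x)+c_n\lambda$ for $x\in Q_k$. Applying Corollary~\ref{COROLARIO} to $f$ on each $Q_k$ (with parameter $\gamma$) and summing over $k$ gives, for a dimensional constant $K$,
\[ w(\{M_Q(f-f_Q)>K\lambda,\ M^\sharp f\leq\gamma\lambda\})\leq c_1 e^{-c_2/(\gamma[w]_{A_\infty})}\,w(\{M_Q(f-f_Q)>\lambda\}). \]

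\emph{Step 3 (iteration and conclusion).} Writing $\Phi(\lambda)=w(\{M_Q(f-f_Q)>\lambda\})$ and combining the previous estimate with Step~1,
\[ \Phi(K\lambda)\leq\frac{\|a\|^r a(Q)^r w(Q)}{(\gamma\lambda)^r}+c_1 e^{-c_2/(\gamma[w]_{A_\infty})}\Phi(\lambda). \]
Choose $\gamma=c/(r[w]_{A_\infty})$ with $c>0$ small enough that $c_1 K^r e^{-c_2/(\gamma[w]_{A_\infty})}\leq 1/2$. Multiplying by $(K\lambda)^r$, taking $\sup_\lambda$, and absorbing the $\Phi$-term (made finite a priori by a standard truncation of $f$, after which one passes to the limit) yields
\[ \sup_\lambda\lambda^r\Phi(\lambda)\leq c_n\,(r[w]_{A_\infty}\|a\|\,a(Q))^r\,w(Q), \]
which, together with $|f-f_Q|\leq M_Q(f-f_Q)$ a.e.\ on $Q$, is the claimed weak-$L^r$ bound. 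The main technical point is upgrading the one-shot good-$\lambda$ of Corollary~\ref{COROLARIO} (whose right-hand side is $w(Q)$) to the iterative one (with $w(\{M_Q(f-f_Q)>\lambda\})$ on the right); this is what allows the absorption to close without a logarithmic loss in $\lambda$ and produces the correct factor $r[w]_{A_\infty}$ in the final constant.
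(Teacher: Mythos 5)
Your proposal is correct and follows essentially the same route as the paper's proof: Calder\'on--Zygmund decomposition of $\{M_Q(f-f_Q)>\lambda\}$, localization to the CZ cubes, the good-$\lambda$ estimate from Corollary~\ref{COROLARIO} on the piece where $M^\sharp f$ is small, the $D_r(w)$ condition together with \eqref{starting-point} to handle the piece where $M^\sharp f$ is large, and finally an iteration of the resulting recursion with $\gamma\sim 1/(r[w]_{A_\infty})$. The only cosmetic difference is that you package the $D_r(w)$-based weak-$L^r$ bound as a statement about an auxiliary maximal function $\mathcal A_Q$ dominating $M_Q^\sharp f$, whereas the paper carries out the same computation inline on the super-level set of the local sharp maximal function.
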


\begin{remark}

The method in  \cite{FPW98}, based on the good-$\lambda$ method of Burkholder--Gundy \cite{BG}, yields an exponential bound in  $[w]_{A_\infty}$. We still use here the good-$\lambda$ method but we use instead Corollary \ref{COROLARIO}. 

\end{remark}

$\bullet$ {\it The $C_p$ condition.} 
As a second application of Theorem \ref{Teorema1} we provide an improvement of theorem of K. Yabuta \cite{YCF} concerning a classical inequality of Feffereman--Stein relating the Hardy-Littlewood maximal function $M$ and the sharp maximal function $M^\sharp$ introduced by them in \cite{FS}. 

This result of Yabuta considers the $C_p$ class of weights which is a larger class of weights than the $A_{\infty}$ class of Muckenhoupt (see Section \ref{prelim}).

\begin{theorem}[Quantitative norm inequality]\label{Yabuta-quantif}
Let $1<p<q<\infty$ and $w\in C_q$. Then for any $f\in L^\infty_c(\R^n)$ we have
$$ \norm{Mf}_{L^p(w)} \leq  c_n \frac{pq}{q-p} \max(1,[w]_{C_q} \log^+[w]_{C_q}) \|M^\sharp f\|_{L^p(w)},$$
where the constant $c_n$ only depends on $n$.
\end{theorem}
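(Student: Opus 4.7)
The plan is to follow a Fefferman--Stein/Yabuta good-$\lambda$ strategy, with Corollary \ref{COROLARIO} (specialized to $w=1$, so that $[1]_{A_\infty}$ is a dimensional constant) supplying the crucial \emph{exponential} Lebesgue decay. Since $f\in L^\infty_c$, a standard truncation argument ensures $\|Mf\|_{L^p(w)}<\infty$, which legitimates the absorption step at the end. For each $\lambda>0$ I perform the Calder\'on--Zygmund decomposition of $\Omega_\lambda=\{Mf>\lambda\}$ into pairwise disjoint cubes $\{Q_j^\lambda\}$ with $\lambda<f_{Q_j^\lambda}\leq 2^n\lambda$; the standard localization gives, for a large dimensional constant $c_n$ and a fixed dilate $Q_j^{\lambda,*}$,
\begin{equation*}
\{Mf>c_n\lambda\}\cap Q_j^\lambda \subset \bigl\{x\in Q_j^{\lambda,*}: M_{Q_j^{\lambda,*}}(f-f_{Q_j^{\lambda,*}})(x)>\lambda\bigr\}.
\end{equation*}

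Splitting this set by whether $M^\sharp f(x)\leq \gamma\lambda$ for a parameter $\gamma>0$ to be optimized, Corollary \ref{COROLARIO} applied to the local dyadic maximal operator with Lebesgue measure bounds the good portion by $c_1 e^{-c_2/\gamma}|Q_j^{\lambda,*}|$. The $C_q$ hypothesis, in the form $w(E)\leq [w]_{C_q}(|E|/|Q|)^\delta\int(M\chi_Q)^q w$ for $E\subset Q$ (with $\delta$ a parameter controlled by $[w]_{C_q}$), transfers this to $w$-measure, and a Carleson-type disjointness lemma bounds $\sum_j\int(M\chi_{Q_j^{\lambda,*}})^q w$ by $c_{n,q}\int(M\chi_{\Omega_\lambda})^q w$, yielding
\begin{equation*}
w\bigl(\{Mf>c_n\lambda,\ M^\sharp f\leq\gamma\lambda\}\bigr) \leq c_{n,q}\,[w]_{C_q}\,(c_1 e^{-c_2/\gamma})^\delta \int(M\chi_{\Omega_\lambda})^q w.
\end{equation*}
A Fubini computation, based on the equivalence $M\chi_{\{Mf>\lambda\}}(x)>s\iff\lambda<M_s f(x)$ for the $s$-percentile maximal function $M_s f\leq Mf$, then gives
\begin{equation*}
\int_0^\infty p\lambda^{p-1}\int(M\chi_{\Omega_\lambda})^q w\,dx\,d\lambda \leq \tfrac{q}{q-p}\,\|Mf\|_{L^p(w)}^p.
\end{equation*}
Combining with the trivial bound $p\int_0^\infty\lambda^{p-1}w(\{M^\sharp f>\gamma\lambda\})\,d\lambda=\gamma^{-p}\|M^\sharp f\|_{L^p(w)}^p$, one arrives at
\begin{equation*}
\|Mf\|_{L^p(w)}^p \leq c_{n,q}\,\tfrac{q}{q-p}\,[w]_{C_q}\,e^{-c_2\delta/\gamma}\,\|Mf\|_{L^p(w)}^p + c_n^p\,\gamma^{-p}\,\|M^\sharp f\|_{L^p(w)}^p,
\end{equation*}
and choosing $\gamma$ so that the first coefficient equals $1/2$ produces, after absorption and extraction of a $p$-th root, the claimed bound.

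The main obstacle lies in the precise tracking of constants: one must verify that both the Carleson-type disjointness lemma and the interaction between $\delta$ and $[w]_{C_q}$ combine to give exactly the factor $\max(1,[w]_{C_q}\log^+[w]_{C_q})$ (rather than a power of $[w]_{C_q}$) in the final estimate. The exponential nature of Corollary \ref{COROLARIO} is essential: a polynomial Lebesgue decay would only yield a polynomial, rather than logarithmic, dependence on $[w]_{C_q}$ in the final bound.
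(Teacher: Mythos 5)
Your overall outline — Whitney/Calder\'on--Zygmund decomposition of $\Omega_\lambda=\{Mf>\lambda\}$, a localized good-$\lambda$ inequality with exponential decay from Corollary \ref{COROLARIO}, transfer to $w$-measure via the quantitative $C_q$ condition, and a final absorption — matches the structure of the paper's proof (Proposition \ref{good-lambda-exp} and the subsequent computation). However, there is a genuine gap at the step where you invoke a ``Carleson-type disjointness lemma'' of the form
\begin{equation*}
\sum_j\int_{\R^n}\big(M\car{Q_j^{\lambda,*}}\big)^q\,w\;\leq\;c_{n,q}\int_{\R^n}\big(M\car{\Omega_\lambda}\big)^q\,w.
\end{equation*}
This inequality is false. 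For Whitney cubes $Q_j$ of an open set $\Omega$, the quantity $\sum_j\big(M\car{Q_j^*}(x)\big)^q$ diverges as $x\to\partial\Omega$: already in $\R^1$ with $\Omega=(0,1)$ and $Q_j\sim[2^{-j-1},2^{-j}]$, each term satisfies $M\car{Q_j^*}(-D)\gtrsim 1$ for all $j$ with $2^{-j}\gtrsim D$, so the sum is $\gtrsim\log(1/D)$ while $\big(M\car\Omega(-D)\big)^q$ stays bounded. Concentrating $w$ near such a point defeats the claimed inequality, so the term $\sum_{k,j}2^{kp}\int(M\car{Q_j^k})^q w$ cannot be reduced to $\int(M\car{\Omega_{2^k}})^q w$ with a constant independent of $p,q,[w]_{C_q}$.

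This is exactly the point where the paper brings in the Marcinkiewicz operator $M_{p,q}$ of Sawyer and Lemma 5.8 of \cite{CCF}, which gives
\begin{equation*}
\int\big(M_{p,q}Mf\big)^p w\;\leq\;2^{c_n\frac{pq}{q-p}}\,\frac1\eps\log\frac1\eps\int(Mf)^p w,\qquad\eps\sim\frac1{\max(1,[w]_{C_q})}.
\end{equation*}
The exponential factor $2^{c_n\frac{pq}{q-p}}$, once pushed through the logarithm in the choice of $\gamma$, is precisely the source of the $\frac{pq}{q-p}$ in the theorem; your computation would instead deliver only $\log\frac{q}{q-p}$, a bound too strong to be correct, and this discrepancy is a symptom of the missing lemma. (Your pointwise estimate $M\car{\Omega_\lambda}(x)\leq\min\big(1,c_n\tfrac{Mf(x)}{\lambda}\big)$ and the resulting Fubini inequality $\int_0^\infty p\lambda^{p-1}\int(M\car{\Omega_\lambda})^q w\,d\lambda\leq c_n^p\tfrac{q}{q-p}\|Mf\|_{L^p(w)}^p$ are both fine; the problem is getting from the sum over Whitney cubes to $\int(M\car{\Omega_\lambda})^q w$ in the first place.) A secondary issue is that you pass from the global non-dyadic $M$ to the local dyadic $M_{Q_j^{\lambda,*}}$ in one step; the paper needs the non-dyadic Theorem \ref{Teoremanodiadico}, proved via Conde's dyadic covering Lemma \ref{Lema-cubo-diadico}, to make the good-$\lambda$ estimate legitimate in the Whitney setting.
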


\begin{remark}
We remark that, as a consequence of Corollary \ref{COROLARIO}, we can also obtain the following weighted inequality for $A_\infty$ weights by standard arguments:
$$\norm{Mf}_{L^p(w)} \leq c [w]_{A_\infty} \ \|M^\sharp f\|_{L^p(w)}, \quad 0<p<\infty.$$
This inequality is not new, see for example \cite{LERNER06}.
\end{remark}

Very recently, A. Lerner \cite{Lerner19} proved a characterization of weights satisfying a weak Fefferman--Stein inequality 
\[ \|f\|_{L^{p,\infty}(w)} \leq C \| M^\sharp f\|_{L^p(w)}.\]
The weights satisfying this inequality are of a different class of weights, called $SC_p$ (strong $C_p$). This class is contained in $C_p$ and contains $C_{p+\eps}$ for every $\eps>0$.

Theorem \ref{Yabuta-quantif} has a straight application to the wide class of operators described in \cite{CLPRCF}. Indeed, we say that an operator satisfies the \eqref{PROPIEDAD} property if there  are some constants $\delta\in(0,1)$ and $c>0$ such that for all $f$,
\begin{equation}
\label{PROPIEDAD}\tag{D} M^\sharp_\delta (Tf)(x) \leq c Mf(x), \quad a.e. \: x.
\end{equation}
Here $M$ denotes the standard Hardy--Littlewood maximal operator and $M^\sharp _\delta f = M^\sharp(f^\delta)^\frac1	\delta$.  This property is modeled by a result in \cite{AP} where \eqref{PROPIEDAD} was proved for any Calder\'on--Zygmund operator. It also holds for some square function operators and some pseudo-differential operators. The multilinear Calder\'on--Zygmund operators version was obtained in \cite{LOPTT}. There is a more exhaustive list in \cite{CLPRCF}.

\begin{corollary}\label{CpOperator(D)condition}
Let $1<p<q<\infty$ and $T$ be an operator that satisfies the property \eqref{PROPIEDAD} with constant $C_T$ for some $\frac pq <\delta<1$. Then for $w\in C_q$ we have
$$
\norm{Tf}_{L^p(w)} \leq c_n\ C_T \ \left(\frac{pq}{\delta q-p} \  \max(1,[w]_{C_q}\log^+[w]_{C_q}) \right)^\frac1\delta \norm{Mf}_{L^p(w)}.
$$
\end{corollary}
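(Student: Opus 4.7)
The plan is to apply Theorem~\ref{Yabuta-quantif} after the standard $\delta$-linearization trick: write $g=|Tf|^\delta$, observe that $|Tf|\le M(|Tf|^\delta)^{1/\delta}$ almost everywhere by Lebesgue differentiation, and relate $\|M^\sharp g\|$ to $\|Mf\|$ using hypothesis~\eqref{PROPIEDAD}. The role of $\delta$ is to ensure $|Tf|^\delta$ is well-behaved enough that $M^\sharp_\delta(Tf)$ makes sense and is pointwise dominated by $Mf$.

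First I would record the identity
\[
\|Tf\|_{L^p(w)} \;=\; \bigl\||Tf|^\delta\bigr\|_{L^{p/\delta}(w)}^{1/\delta},
\]
and use $g=|Tf|^\delta\le Mg$ a.e.\ to get $\|Tf\|_{L^p(w)}\le \|Mg\|_{L^{p/\delta}(w)}^{1/\delta}$. The hypothesis $p/q<\delta<1$ is equivalent to $1<p/\delta<q$, which is exactly the range of exponents in which Theorem~\ref{Yabuta-quantif} applies to $g$ with the weight $w\in C_q$, yielding
\[
\|Mg\|_{L^{p/\delta}(w)} \;\le\; c_n\,\frac{(p/\delta)q}{q-p/\delta}\,\max\bigl(1,[w]_{C_q}\log^+[w]_{C_q}\bigr)\,\|M^\sharp g\|_{L^{p/\delta}(w)}.
\]
A direct simplification gives $\dfrac{(p/\delta)q}{q-p/\delta}=\dfrac{pq}{\delta q-p}$, which produces exactly the prefactor appearing in the statement.

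Next, I would unravel $M^\sharp g$: by definition of $M^\sharp_\delta$,
\[
M^\sharp g(x)=M^\sharp(|Tf|^\delta)(x)=\bigl(M^\sharp_\delta(Tf)(x)\bigr)^{\delta},
\]
so $\|M^\sharp g\|_{L^{p/\delta}(w)}^{1/\delta}=\|M^\sharp_\delta(Tf)\|_{L^p(w)}$. Now I invoke property~\eqref{PROPIEDAD}, which gives $M^\sharp_\delta(Tf)\le C_T\,Mf$ pointwise a.e., hence $\|M^\sharp_\delta(Tf)\|_{L^p(w)}\le C_T\|Mf\|_{L^p(w)}$. Raising the Yabuta bound on $\|Mg\|_{L^{p/\delta}(w)}$ to the power $1/\delta$ and chaining the inequalities yields the claimed estimate
\[
\|Tf\|_{L^p(w)} \;\le\; c_n\,C_T\,\left(\frac{pq}{\delta q-p}\,\max\bigl(1,[w]_{C_q}\log^+[w]_{C_q}\bigr)\right)^{1/\delta}\|Mf\|_{L^p(w)}.
\]

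The only genuine obstacle is the customary finiteness issue: Theorem~\ref{Yabuta-quantif} is stated for $f\in L^\infty_c(\R^n)$, so the argument as written requires that $\|Mg\|_{L^{p/\delta}(w)}$ (equivalently $\|Tf\|_{L^p(w)}$) be a priori finite before one can absorb or rearrange. I would handle this by the standard approximation: first assume $f\in L^\infty_c$ so that $Tf$ has enough decay/integrability for the $C_q$ machinery to apply, prove the bound in that setting, and then extend by a density/truncation argument (or state the corollary for $f\in L^\infty_c$, consistent with Theorem~\ref{Yabuta-quantif}). Everything else is bookkeeping of exponents and constants.
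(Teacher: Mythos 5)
Your proposal is correct and follows the same route as the paper: raise to the power $\delta$, use $|Tf|\le M_\delta(Tf)$, apply Theorem~\ref{Yabuta-quantif} with exponent $p/\delta\in(1,q)$, rewrite $M^\sharp(|Tf|^\delta)$ as $(M^\sharp_\delta(Tf))^\delta$, and invoke property~\eqref{PROPIEDAD}. Your exponent arithmetic $(p/\delta)q/(q-p/\delta)=pq/(\delta q-p)$ matches the paper, and your remark about restricting to $f\in L^\infty_c$ is consistent with how Theorem~\ref{Yabuta-quantif} is stated.
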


\subsection{Weighted mean oscillation}

The second extension of the JN theorem we consider in this paper is motivated by the following classical result of  Muckenhoupt and Wheeden in \cite{MW}. 

Following the language in \cite{MW}, a function $f$ is said to be bounded mean oscillation with weight $w$ if there exists some $C>0$ such that for every cube $Q$,
\[
\int_Q |f-f_Q| \leq C \ w(Q).
\]
This class of functions is interesting because it is connected to the theory of weighted Hardy spaces \cite{GC} 
and to the context of extrapolation \cite{HMS} (see more details and a new proof in \cite{CPR}).  

\begin{theorem*}[Muckenhoupt--Wheeden]
Let $1\leq p<\infty$ and $w\in A_p$. Then $f$ is of bounded mean oscillation with weight $w$ if and only if  for every $1\leq r<\infty$ satisfying $1\leq r\leq p'$, there exists a constant such that, for all cubes $Q$,
\begin{equation}\label{Muckenhoupt-Wheeden-original}
\int_Q |f-f_Q|^r w^{1-r} \leq c\, w(Q).
\end{equation}
\end{theorem*}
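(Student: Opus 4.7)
\emph{Sufficiency.} Suppose $\int_Q|f-f_Q|^r w^{1-r}\le c\,w(Q)$ holds for some $r\in[1,p']$. H\"older's inequality with conjugate exponents $r$ and $r'$ immediately yields
\[
\int_Q|f-f_Q|\le\Bigl(\int_Q|f-f_Q|^r w^{1-r}\Bigr)^{1/r}\Bigl(\int_Q w\Bigr)^{1/r'}\le c^{1/r}w(Q),
\]
so $f$ is of bounded mean oscillation with weight $w$. This is the easy half and does not use the $A_p$ hypothesis.

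\emph{Necessity.} Fix $r\in[1,p']$ and assume $\int_Q|f-f_Q|\le C\,w(Q)$ for every cube $Q$. Averaging the hypothesis over all cubes containing a point $x$ gives the pointwise bound
\[
M^\sharp f(x)\le C\,Mw(x).
\]
Next I check that $\sigma:=w^{1-r}$ lies in $A_\infty$: this is trivial for $r=1$, while for $1<r\le p'$ one has $w\in A_p\subset A_{r'}$ (since $r'\ge p$), which by the standard duality $A_q\leftrightarrow A_{q'}$ is equivalent to $\sigma\in A_r\subset A_\infty$. Since $|f-f_Q|\le M_Q(f-f_Q)$ a.e.\ on $Q$, the task reduces to bounding $\int_Q M_Q(f-f_Q)^r\,\sigma$.

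To this end I would apply Corollary \ref{COROLARIO} with the $A_\infty$ weight $\sigma$: the good-$\lambda$ statement therein, combined with the usual Burkholder--Gundy layer-cake argument (iterated along the dyadic level sets of $M_Q(f-f_Q)$, with $\gamma$ chosen small enough that the exponential factor $e^{-c_2/(\gamma[\sigma]_{A_\infty})}$ beats the geometric growth factor), yields a weighted Fefferman--Stein inequality of the form
\[
\int_Q M_Q(f-f_Q)^r\,\sigma\le C\int_Q(M^\sharp f)^r\,\sigma\le C'\int_Q(Mw)^r\,w^{1-r}.
\]
Since $\sigma\in A_r$, the operator $M$ is bounded on $L^r(w^{1-r})$; applying this to $w\chi_{2Q}$ and controlling the remaining tail of $Mw$ on $Q$ by $c\,\langle w\rangle_{2Q}$ (handled directly by the $A_p$ condition) gives $\int_Q(Mw)^r w^{1-r}\le c\,w(Q)$. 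Chaining these inequalities finishes the proof.

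The crucial technical step is the extraction of the $L^r$ Fefferman--Stein inequality from the exponential form of Corollary \ref{COROLARIO}; while routine in nature, it requires careful bookkeeping of the dependence of the constant on $[\sigma]_{A_\infty}$ and $r$, as this is what ultimately determines the quantitative dependence of the final constant on $[w]_{A_p}$. Everything else -- the pointwise bound $M^\sharp f\le C Mw$, the passage $w\in A_p\Rightarrow w^{1-r}\in A_r$, and the maximal function estimate for $Mw$ -- is classical.
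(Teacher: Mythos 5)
The paper does not give a direct proof of this statement: it cites Muckenhoupt--Wheeden and then proves the quantitative strengthening, Theorem \ref{bumped-muckenhoupt-wheeden} and Corollary \ref{ThmBloomBMO}, by a self-improving Calder\'on--Zygmund stopping-time argument applied directly to $|f-f_Q|$ at height $L\,w_r(Q)$; the $r=p'$ endpoint follows from Corollary \ref{ThmBloomBMO} and the intermediate $1\le r<p'$ by H\"older against $w\,dx$. Your route is genuinely different: you reduce to the pointwise bound $M^\sharp f\le C\,Mw$ and then invoke a weighted Fefferman--Stein inequality for the $A_\infty$ weight $\sigma=w^{1-r}$. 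The sufficiency half, the duality step $w\in A_p\subset A_{r'}\iff\sigma\in A_r$, the pointwise bound on $M^\sharp f$, and the reduction to $M_Q(f-f_Q)$ are all fine, and your plan to iterate the good-$\lambda$ inequality of Corollary \ref{COROLARIO} along the Calder\'on--Zygmund cubes of the level sets (exactly as in Section \ref{GeneralizedPI}) to obtain $\int_Q M_Q(f-f_Q)^r\,\sigma\lesssim\int_Q (M^\sharp f)^r\,\sigma$ is a legitimate and appealing alternative.

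There is, however, a genuine gap at the end: the claim that for $x\in Q$ the tail $M(w\chi_{(2Q)^c})(x)$ is controlled by $c\,\langle w\rangle_{2Q}$ is false for a general $A_p$ weight. In fact $Mw$ can be identically $+\infty$ on $\R^n$ even when $w\in A_p$ (take $w(x)=|x|^a$ with $0<a<n(p-1)$), and in that case the pointwise bound $M^\sharp f\le C\,Mw$ is vacuous and your chain of inequalities collapses. The same issue means $\int_Q (Mw)^r w^{1-r}\lesssim w(Q)$ simply need not hold as you wrote it. The fix is to localize throughout: use the local dyadic sharp maximal function $M_Q^\sharp$, which by Remark \ref{REMARK-DIADICO} is the natural object appearing in Theorem \ref{Teorema1} and hence in Corollary \ref{COROLARIO}, and observe that the hypothesis only gives the pointwise bound $M_Q^\sharp f(x)\le C\,M_Q(w)(x)$ for $x\in Q$, where $M_Q$ is the local dyadic maximal operator over $Q$. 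Then $\int_Q (M_Q w)^r\,\sigma\lesssim\int_Q w^r\sigma=w(Q)$ follows from the (dyadic) $A_r$ boundedness of $M_Q$, there is no tail to worry about, and the argument closes correctly. I would also make explicit the a priori finiteness needed for absorption in the good-$\lambda$ iteration (truncate, exactly as the paper does in the proofs of Theorems \ref{Teorema1} and \ref{bumped-muckenhoupt-wheeden}).
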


As was shown in \cite{MW}, the range $1\leq r\leq p'$ is optimal, since for any given $p>1$ there exist $f,w$ for which $w\in A_q$ for all $q>p$ but \eqref{Muckenhoupt-Wheeden-original} fails for $r=p'$. 

In \cite{OPRRR19} the authors obtained a mixed-type $A_p$--$A_\infty$ quantitative estimate of inequality \eqref{Muckenhoupt-Wheeden-original}. Here we are going to improve Theorem 1.7 from that paper, using a simplified and more transparent argument that avoids completely the use of sparse domination.

The main idea is closely related to the proof of Theorem \ref{Teorema1} above.

For a weight $w$ and $p>1, r\geq1$, we define the following bumped $A_p$ constant 
$$
[w]_{A^r_p} = \sup_Q \left( \avgint_Q w^r\right)^\frac1r \left( \avgint_Q w^{1-p'}\right)^{p-1}.
$$
Note that $[w]_{A_p}\leq [w]_{A^r_p}$ for $r\geq1$.

\begin{theorem}\label{bumped-muckenhoupt-wheeden} Let $p,r>1$, $w$ such that $[w]_{A_p^r}<	\infty$ and let $f$ locally integrable such that 
\begin{equation*}
\| f \|_{\BMO_{w,r}}:= \sup_Q \frac{1}{w_r(Q)} \int_Q |f-f_Q|    <\infty.
\end{equation*}
Then we have the estimate 
\begin{equation*}
\left(\frac{1}{w_r(Q)}\int_Q \left( \frac{|f(x)-f_Q|}{w(x)}\right)^{p'} w(x)dx\right)^\frac{1}{p'} \leq c_n p' [w]_{A^r_p}^\frac1p \,(r')^\frac{1}{p'} \|f\|_{\BMO_{w,r}} .
\end{equation*}
\end{theorem}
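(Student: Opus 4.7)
I would follow the same strategy as Theorem~\ref{Teorema1}: a Calder\'on--Zygmund (CZ) iteration adapted to the weighted BMO hypothesis yields John--Nirenberg-style exponential decay, and the bumped $A_p^r$ hypothesis transfers this decay to the conjugate measure $\sigma:=w^{1-p'}$. Fix a cube $Q$ and parameters $a,\beta>1$ to be optimised. Build a principal family $\mathcal{S}=\bigsqcup_{k\geq 0}\mathcal{S}_k$ of dyadic subcubes of $Q$ with $\mathcal{S}_0=\{Q\}$, the members of $\mathcal{S}_{k+1}$ contained in $P\in\mathcal{S}_k$ being the maximal dyadic $P'\subsetneq P$ satisfying either
\[
\avgint_{P'}|f-f_P|>a\frac{w_r(P)}{|P|}\|f\|_{\BMO_{w,r}}\quad\text{or}\quad\Big(\avgint_{P'}w^r\Big)^{1/r}>\beta\Big(\avgint_{P}w^r\Big)^{1/r}.
\]
Standard CZ reasoning together with the hypothesis $\int_P|f-f_P|\leq w_r(P)\|f\|_{\BMO_{w,r}}$ gives $\sum_{P'\in\mathrm{ch}(P)}|P'|\leq(1/a+1/\beta^{r})|P|=:\gamma|P|$, while criterion~(b) forces $w_r(P)/|P|\leq\beta^{k}w_r(Q)/|Q|$ for $P\in\mathcal{S}_k$. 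Lebesgue differentiation on $E(P):=P\setminus\bigcup\mathrm{ch}(P)$ then gives $|f-f_P|\leq a w_r(P)/|P|\cdot\|f\|_{\BMO_{w,r}}$, and telescoping through $\mathcal{S}$-ancestors yields the pointwise bound
\[
|f(x)-f_Q|\leq C_\beta\,a\,\beta^{k}\,\frac{w_r(Q)}{|Q|}\|f\|_{\BMO_{w,r}},\qquad x\in E(P),\ P\in\mathcal{S}_k.
\]

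Next, a one-line H\"older computation from the definition of $[w]_{A_p^r}$ gives the cube-wise identity
\[
\sigma(P)\,w_r(P)^{p'/p}\leq [w]_{A_p^r}^{p'/p}|P|^{p'},
\]
in which the exponent algebra collapses thanks to $p'(1-1/p)=1$. Decomposing $Q=\bigsqcup_{P\in\mathcal{S}}E(P)$ and inserting the pointwise control of Step~1 gives
\[
\int_Q|f-f_Q|^{p'}\sigma\leq C_\beta a^{p'}\Big(\tfrac{w_r(Q)}{|Q|}\|f\|_{\BMO_{w,r}}\Big)^{p'}\sum_{k\geq 0}\beta^{kp'}\,\sigma\!\Big(\bigcup\mathcal{S}_k\Big).
\]
A third stopping criterion $\avgint_{P'}\sigma>\beta\avgint_P\sigma$, which increases $\gamma$ only by $1/\beta$, produces the $\sigma$-decay $\sigma(\bigcup\mathcal{S}_k)\leq (\beta\gamma)^{k}\sigma(Q)$, after which the right-hand side reduces (using $p'-p'/p=1$ once more) to $C\,a^{p'}[w]_{A_p^r}^{p'/p}\|f\|_{\BMO_{w,r}}^{p'}w_r(Q)\sum_k(\beta^{p'+1}\gamma)^{k}$.

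Finally, one tunes the free parameters: take $a$ an absolute constant and $\beta-1\sim 1/r'$, so that $\gamma<1$ while $\beta^{p'+1}\lesssim\exp(Cp'/r')$; the resulting geometric series is of size $O(r')$, and the $p'$-th root then contributes an extra $(r')^{1/p'}$. Combined with $(p')^{1/p'}\leq c$ absorbed into $c_n$, this matches the stated constant $c_n p'[w]_{A_p^r}^{1/p}(r')^{1/p'}\|f\|_{\BMO_{w,r}}$. The main obstacle is precisely this final balancing: $\beta$ must be close enough to $1$ that the telescoping factor $\beta^{kp'}$ costs only $O(r')$, yet large enough that the combined stopping criteria provide genuine sparsity $\gamma<1$ and the corresponding $\sigma$-decay. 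This is the same tension that governs the proof of Theorem~\ref{Teorema1} and of the direct proof of the classical John--Nirenberg theorem given in the Appendix.
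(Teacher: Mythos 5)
Your proposal takes a genuinely different route from the paper's: you build an iterated sparse family (a multi-generation Calder\'on--Zygmund stopping time) and sum a geometric series, whereas the paper performs a \emph{single} level of Calder\'on--Zygmund decomposition and closes the argument by a bootstrap on the quantity $X=\sup_R\big(\tfrac{1}{w_r(R)}\int_R(\tfrac{|f-f_R|}{w})^{p'}w\big)^{1/p'}$, arriving at an inequality of the form $X\leq c_n[w]_{A_p^r}^{1/p}(r')^{1/p'}L+XL^{-1/p'}$ and optimising in $L$. The factor $(r')^{1/p'}$ in the paper comes from a Kolmogorov-type estimate for the local dyadic maximal function applied to $w^r\chi_Q$, not from tuning a geometric ratio. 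The paper emphasises that it deliberately avoids sparse domination (which was used in \cite{OPRRR19}) precisely to get a shorter, sharper argument; your plan effectively reintroduces it.

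There is, however, a genuine gap in your iterative scheme. First, the sparsity constant with all three stopping criteria is $\gamma=1/a+\beta^{-r}+\beta^{-1}$, and you need $\gamma<1$; but your balancing choice $\beta-1\sim 1/r'$ forces $\beta\to 1^+$ as $r\to 1^+$, in which case $\beta^{-r}+\beta^{-1}\to 2$ and $\gamma>1$, so the family is not sparse at all. Second, even granting $\gamma<1$, the claimed decay $\sigma(\bigcup\mathcal{S}_k)\leq(\beta\gamma)^k\sigma(Q)$ is not justified: the correct bound, obtained by going through the dyadic parent of each stopping cube (the parent fails criterion (c), not the stopping cube itself), carries a factor $2^n$ per generation, so one only gets $\sigma(\bigcup\mathcal{S}_k)\lesssim(2^n\beta\gamma)^k\sigma(Q)$. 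The same dyadic-doubling constant $2^{n/r}$ appears in the density growth $w_r(P)/|P|\lesssim (2^{n/r}\beta)^k\,w_r(Q)/|Q|$, again omitted in your Step~1. Once these constants are put back, the geometric ratio you need to be strictly less than one becomes $(2^{n/r}\beta)^{p'}\cdot 2^n\beta\gamma$, which is $\geq 2^{n(p'/r+1)}>1$ independently of how you choose $a$ and $\beta$ (since $\gamma\geq\beta^{-1}$ from criterion~(c)). The series therefore cannot converge, and the final balancing step of your plan breaks down. You would need either a qualitatively different way to control $\sum_k\beta^{kp'}\sigma(\bigcup\mathcal{S}_k)$ (for instance a Carleson-embedding argument, which typically costs an extra $[\sigma]_{A_\infty}$ and would not reproduce the stated constant), or you should abandon the iteration and use the paper's one-level bootstrap together with the Kolmogorov estimate to produce the $(r')^{1/p'}$ factor.
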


}

\begin{corollary}\label{ThmBloomBMO}
Let $f\in \BMO_{w,1}$, namely $\sup_Q\frac{1}{w(Q)}\int_Q\abs{f-f_{Q} }<\infty$. 
\begin{enumerate}
\item If $w\in A_{1}$ we have that for every $q>1$, 
\[
\left(\frac{1}{w(Q)}\int_{Q}\left|\frac{f(x)-f_{Q}}{w(x)}\right|^{q}w(x)dx\right)^{\frac{1}{q}}\leq c_{n}\, q \, [w]_{A_{1}}^{\frac{1}{q'}}\, [w]_{A_{\infty}}^{\frac{1}{q}}\, \|f\|_{\BMO_{w,1}},
\]
and hence for any cube $Q$
\begin{equation} \label{genJN}
\norm{ \frac{ f- f_{Q} }{w} }_{ \exp L\big(Q,\frac{w(x)dx}{w(Q)} \big) } \, \leq c_n\, [w]_{A_1}\, \|f\|_{\BMO_{w,1}}.
\end{equation}
\item If $w\in A_{p}$ with $1<p<\infty$ then, 
\[
\left(\frac{1}{w(Q)}\int_{Q}\left|\frac{f(x)-f_{Q}}{w(x)}\right|^{p'}w(x)dx\right)^{\frac{1}{p'}}\leq c_{n}\, p'\, [w]_{A_{p}}^{\frac{1}{p}}\,[w]_{A_{\infty}}^{\frac{1}{p'}}\,\|f\|_{\BMO_{w,1}} .
\]
\end{enumerate}
\end{corollary}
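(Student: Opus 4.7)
The plan is to deduce both parts of the corollary from Theorem \ref{bumped-muckenhoupt-wheeden} by choosing the bump exponent $r$ via the sharp reverse Hölder inequality for $A_\infty$ weights. Concretely, I would set $r = 1 + \frac{1}{\tau_n [w]_{A_\infty}}$ for the dimensional constant $\tau_n$ coming from the Hytönen--Pérez sharp reverse Hölder, so that simultaneously $\bigl(\avgint_Q w^r\bigr)^{1/r} \le 2\,\avgint_Q w$ and $r' \le c_n\,[w]_{A_\infty}$.

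For part (2), this choice of $r$ has three consequences that convert the bumped estimate of Theorem \ref{bumped-muckenhoupt-wheeden} into the target one: (a) by reverse Hölder and ordinary Hölder, $w(Q) \le w_r(Q) \le 2\,w(Q)$, which lets me replace $w_r(Q)$ by $w(Q)$ on both sides of the inequality at the cost of a dimensional factor; (b) the bumped constant satisfies $[w]_{A_p^r} \le 2\,[w]_{A_p}$ directly from the reverse Hölder bound on $\bigl(\avgint w^r\bigr)^{1/r}$; and (c) $\|f\|_{\BMO_{w,r}} \le \|f\|_{\BMO_{w,1}}$ because $w_r(Q) \ge w(Q)$. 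Plugging into the conclusion of Theorem \ref{bumped-muckenhoupt-wheeden} and estimating $(r')^{1/p'} \le c_n [w]_{A_\infty}^{1/p'}$ yields exactly the constant $c_n\,p'\,[w]_{A_p}^{1/p}\,[w]_{A_\infty}^{1/p'}$ in part (2).

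For part (1), I would apply part (2) with $p' = q$, that is $p = q'$. When $w \in A_1$, the pointwise bound $Mw \le [w]_{A_1}\,w$ gives $\bigl(\avgint_Q w^{-1/(q'-1)}\bigr) \le (\essinf_Q w)^{-1/(q'-1)}$, so $[w]_{A_{q'}} \le [w]_{A_1}$; substituting this into the estimate from part (2) produces the stated $L^q$ inequality. For the exponential estimate \eqref{genJN}, I appeal to Proposition \ref{EXPONENTIAL} (the standard equivalence $\|g\|_{\exp L(\mu)} \simeq \sup_{q\ge 1}\|g\|_{L^q(\mu)}/q$ on probability spaces): the linear dependence on $q$ in the $L^q$ estimate, together with the uniform bound $[w]_{A_1}^{1/q'}[w]_{A_\infty}^{1/q} \le c_n\,[w]_{A_1}$ that follows from $[w]_{A_\infty} \le c_n [w]_{A_1}$, gives the exponential integrability with constant $c_n[w]_{A_1}\|f\|_{\BMO_{w,1}}$.

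No step presents a serious obstacle; the only care needed is tracking the constants through the reverse Hölder substitution so that the powers of $[w]_{A_p}$ and $[w]_{A_\infty}$ end up split correctly between the $A_p^r$ factor and the $(r')^{1/p'}$ factor. Everything else is bookkeeping.
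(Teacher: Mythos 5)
Your proposal is correct and follows essentially the same route as the paper: choose $r=1+\delta$ with $\delta$ from the sharp reverse H\"older inequality so that $w_r(Q)\le 2w(Q)$, $[w]_{A_p^r}\le 2[w]_{A_p}$ and $r'\lesssim[w]_{A_\infty}$, then specialize Theorem \ref{bumped-muckenhoupt-wheeden} and use $[w]_{A_1}\ge[w]_{A_{q'}}$ for part (1). Your handling of \eqref{genJN} via Proposition \ref{EXPONENTIAL} and $[w]_{A_\infty}\le[w]_{A_1}$ is a correct and slightly more explicit version of what the paper leaves implicit.
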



\begin{remark}
{It follows from \eqref{genJN} that for} any $t>0$ then
\[
w(\{ x\in Q : |f(x)-f_Q|>t \, w(x)  \}) \leq 2 e^{-\frac{\scriptstyle t}{\scriptstyle c_n [w]_{A_1} \|f\|_{\BMO_{w,1}}}} \, w(Q).
\]
\end{remark}

We extend this result to the context of  polynomial type $\BMO$ in Section \ref{polynomial}. It is not clear how to obtain these  new  polynomial $\BMO$ type estimates from the sparse method used in \cite{OPRRR19}.

\section{Some preliminaries and notation}
\label{prelim}

\subsection{Weighted inequalities}

A weight is a non-negative locally integrable function $w$. For $1\leq p<\infty$, we say that $w\in A_p$ if and only if $[w]_{A_p}<\infty$, where
\[
[w]_{A_1} = \esssup_{x\in \R^n} \frac{Mw(x)}{w(x)},
\]
\[ [w]_{A_p} = \sup_Q \left( \avgint_Q w\right)\left(  \avgint_Q w^{1-p'}\right)^{p-1}, \quad 1<p<\infty .
\]
The $A_\infty$ class is defined as the union of all other $A_p$ classes, that is,
\[ A_\infty = \bigcup_{1\leq p<\infty} A_p.\]
We define the $A_\infty$ constant as
\[ [w]_{A_\infty} = \sup_Q \frac{\int_Q M(w\car Q)}{w(Q)}.\]
We state the sharp reverse H\"older inequality for $A_\infty $ weights obtained in \cite{HP} (see also \cite{HPR}).

\begin{theorem}[Sharp RHI, \cite{HP}]\label{SHARP-RHI-Ainfty} Let $w\in A_\infty$ and $\delta = \frac{1}{c_n[w]_{A_\infty}}$. Then for every cube $Q$, we have
\[
 \left( \avgint_Q w^{1+\delta} \right)^\frac{1}{1+\delta} \leq 2\ \avgint_Q w.
\]
\end{theorem}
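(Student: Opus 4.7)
The plan is to bootstrap the defining Fujii--Wilson inequality $\int_Q M(w\chi_Q)\le [w]_{A_\infty}w(Q)$ to a higher-moment estimate via a Calder\'on--Zygmund stopping-time decomposition. First, by the Lebesgue differentiation theorem, $w(x)\le M_Q^d w(x)$ for a.e.\ $x\in Q$, where $M_Q^d$ denotes the local dyadic maximal operator on $Q$; hence
\[
\int_Q w^{1+\delta}\,dx \;\le\; \int_Q (M_Q^d w)^{\delta}\,w\,dx,
\]
and the layer-cake representation writes the right-hand side as $\delta\int_0^\infty t^{\delta-1}w(\Omega_t)\,dt$ with $\Omega_t=\{x\in Q:M_Q^d w(x)>t\}$.

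Next I would split at $t=w_Q=\avgint_Q w$: the range $t\le w_Q$ contributes at most $w_Q^\delta w(Q)$ via the trivial bound $w(\Omega_t)\le w(Q)$. For $t>w_Q$ the Calder\'on--Zygmund stopping argument on $w$ at level $t$ yields pairwise disjoint maximal dyadic cubes $\{Q_j^t\}$ inside $Q$ with $t<w_{Q_j^t}\le 2^n t$ and $\Omega_t=\bigsqcup_j Q_j^t$, so $w(\Omega_t)\le 2^n t\,|\Omega_t|$. To organize all scales simultaneously I would iterate this construction into a principal-cube family $\mathcal{F}\subset \mathcal{D}(Q)$: children of $F\in\mathcal{F}$ are the maximal dyadic $F'\subsetneq F$ with $w_{F'}>2w_F$. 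This construction delivers (a) the sparseness $|E(F)|\ge |F|/2$ for the stopping sets $E(F)=F\setminus\bigsqcup_{F'\in\mathrm{ch}(F)}F'$; (b) the pointwise bound $M_Q^d w(x)\le 2w_F$ on $E(F)$; and (c) the geometric growth $w_{F'}\le 2^{n+1}w_F$ along each chain. Consequently
\[
\int_Q (M_Q^d w)^{\delta}\,w\,dx \;\le\; 2^{\delta}\sum_{F\in\mathcal{F}}w_F^{\delta}\,w(F).
\]

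The Fujii--Wilson identity now enters to control the right-hand sum. Since $M(w\chi_Q)(x)\ge w_F$ on $E(F)$ and the $E(F)$'s are pairwise disjoint, the sparseness gives the Carleson-type embedding $\sum_{F\in\mathcal{F}}w(F)\le 2[w]_{A_\infty}w(Q)$. Combined with $w_F\le (2^{n+1})^k w_Q$ for $F$ in the $k$-th principal generation, the sum $\sum_F w_F^\delta w(F)$ is dominated by a geometric series with ratio $(2^{n+1})^\delta$ against the $A_\infty$-summability budget; choosing $\delta=\frac{1}{c_n[w]_{A_\infty}}$ with $c_n$ a sufficiently large dimensional constant forces this ratio strictly below $1$ and keeps the constant $\le 2$. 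Dividing by $|Q|$ and taking $(1+\delta)$-th roots produces the claimed inequality $(\avgint_Q w^{1+\delta})^{1/(1+\delta)}\le 2\,\avgint_Q w$.

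The main obstacle is that the plain Fujii--Wilson bound yields only $|\Omega_t|\le [w]_{A_\infty}w(Q)/t$, whose decay is one factor of $t$ too slow to make the layer-cake integral converge; it is the principal-cube iteration, together with the precise sparse mass bound $\sum_F w(F)\lesssim [w]_{A_\infty}w(Q)$, that supplies the genuine geometric decay in scale. This is also exactly what pins the sharp threshold $\delta\asymp 1/[w]_{A_\infty}$: any larger $\delta$ breaks the geometric summability across the $\asymp [w]_{A_\infty}$ effective principal generations permitted by the $A_\infty$ budget.
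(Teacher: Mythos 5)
First, a point of reference: the paper does not prove this theorem at all --- it is quoted from \cite{HP} (see also \cite{HPR}) --- so your proposal has to be judged against that proof. Your outline assembles the right ingredients (the pointwise bound $w\le M_Q^d w$, the layer-cake formula, Calder\'on--Zygmund stopping cubes, and the Fujii--Wilson bound $\int_Q M(w\car{Q})\le [w]_{A_\infty}w(Q)$), and several of your intermediate claims are correct: the sparseness $|E(F)|\ge |F|/2$, the bound $M_Q^d w\le 2w_F$ on $E(F)$, and the Carleson packing $\sum_F w(F)\le 2[w]_{A_\infty}w(Q)$ all check out. The gap is in the final summation, and it is not cosmetic. (i) Replacing $w(E(F))$ by $w(F)$ already destroys the sharp constant: for the power weight $w(x)=|x|^{\varepsilon-n}$ on the unit cube, for which $[w]_{A_\infty}\sim 1/\varepsilon$, one has $w(E(F))\approx \varepsilon\, w(F)$ along the principal chain, so $\sum_F w_F^\delta w(F)\gtrsim [w]_{A_\infty}\,w_Q^\delta\, w(Q)$; this quantity simply cannot be bounded by $2\,w_Q^\delta w(Q)$. (ii) The claimed geometric summability is unjustified: the ratio $(2^{n+1})^\delta$ is always strictly greater than $1$, and the budget $\sum_k a_k\le 2[w]_{A_\infty}w(Q)$, where $a_k$ is the $w$-mass of the $k$-th generation, only yields $a_k\le 2[w]_{A_\infty}w(Q)/(k+1)$, which decays far too slowly to beat any exponential growth $(2^{n+1})^{k\delta}$, no matter how small $\delta>0$ is. One can extract genuine geometric decay, $a_k\le 2[w]_{A_\infty}\bigl(1-\tfrac{1}{2[w]_{A_\infty}}\bigr)^k w(Q)$, by applying the Carleson condition localized to every cube of every generation (not just to $Q$), but even then the final constant comes out of order $[w]_{A_\infty}^2$ rather than $2$, and converting that into a dimensional constant by interpolation forces $\delta\sim \bigl([w]_{A_\infty}\log(e+[w]_{A_\infty})\bigr)^{-1}$ --- strictly weaker than the stated theorem and insufficient for the sharp quantitative applications in the paper (e.g.\ $r'\lesssim [w]_{A_\infty}$ in the proofs of Theorem 1.1 and Corollary 1.9).

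The proof in \cite{HP} avoids this discretization loss by staying with the continuous family of level sets $\Omega_t=\{x\in Q: M_Q^d w(x)>t\}$. Localizing $M_Q^d$ to the Calder\'on--Zygmund cubes of $\Omega_t$ and applying Fujii--Wilson on each of them gives
\[
t\,|\Omega_t|+\int_t^\infty |\Omega_s|\,ds \;=\;\int_{\Omega_t}M_Q^d w\,dx\;\le\; [w]_{A_\infty}\,w(\Omega_t)\;\le\; 2^n[w]_{A_\infty}\,t\,|\Omega_t|,
\qquad t\ge \avgint_Q w,
\]
a differential inequality for $\phi(t)=\int_t^\infty|\Omega_s|\,ds$ that integrates to $\phi(t)\le \phi(w_Q)(w_Q/t)^{1/(2^n[w]_{A_\infty})}$; feeding this back into the layer-cake expression for $\int_Q (M_Q^d w)^\delta w\,dx$ with $\delta=\tfrac{1}{2^{n+1}[w]_{A_\infty}}$ produces the bound $2\,w_Q^\delta\,w(Q)$ exactly, hence the theorem with absolute constant $2$ and the sharp exponent. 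This continuous Gehring-type step is the idea your proposal is missing; without it, the principal-cube bookkeeping cannot recover the sharp threshold $\delta\asymp 1/[w]_{A_\infty}$ together with a dimensional constant.
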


For $1<p<\infty$, the $C_p$ class of weights is defined as the weights $w$ for which there exist $C,\eps$ such that for every cube $Q$ and $E\subset Q$, 
\[ w(E) \leq C \left( \frac{|E|}{|Q|}\right)^\eps \int_{\R^n} (M\car Q)^pw. \]
The $C_p$ constant was defined in \cite{CCF} as
\[ [w]_{C_p} = \sup_Q \frac{\int_Q M(w\car Q)}{\int_{\R^n} (M\car Q)^p w}.\]
These weights also satisfy a sharp RHI as found in \cite{CCF}:
\begin{theorem}[Sharp RHI, \cite{CCF}]\label{SHARP-RHI-Cp} Let $1<p<\infty$, $w\in C_p$ and $\delta = \frac{1}{c_{n,p}([w]_{C_p}+1)}$. Then for every cube $Q$, we have
\[
 \left( \avgint_Q w^{1+\delta} \right)^\frac{1}{1+\delta} \leq \frac{2}{|Q|} \int_{\R^n} (M\car Q)^pw.
\]
\end{theorem}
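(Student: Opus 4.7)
My plan is to adapt the Hytönen–Pérez proof of the sharp $A_\infty$ reverse Hölder inequality (Theorem \ref{SHARP-RHI-Ainfty}) to the $C_p$ setting, with the crucial replacement that the Fujii–Wilson-type constant $[w]_{C_p}$ plays the role of $[w]_{A_\infty}$ and the quantity $w(Q)$ is everywhere replaced by $\int_{\R^n}(M\car Q)^p w$. Fix a cube $Q$ and write $w_Q := \avgint_Q w$. Since $w \le M^d_Q w$ almost everywhere on $Q$, where $M^d_Q$ denotes the local dyadic maximal operator on $Q$, it suffices to estimate
\[
I_\delta := \frac{1}{|Q|}\int_Q w \,(M^d_Q w)^\delta\, dx.
\]

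Next, I would perform the standard Calderón–Zygmund stopping decomposition on $Q$: for $\lambda_k = 2^{nk} w_Q$, set $\Omega_k = \{M^d_Q w > \lambda_k\} = \bigsqcup_j Q_j^k$ with $\lambda_k < w_{Q_j^k} \le 2^n\lambda_k$. Splitting the integrand over the nested layers $\Omega_k \setminus \Omega_{k+1}$ (where $M^d_Q w \le \lambda_{k+1}$) and over $Q \setminus \Omega_0$ gives
\[
I_\delta \le w_Q^{1+\delta} + \sum_{k\ge 0}\lambda_{k+1}^\delta \cdot \frac{w(\Omega_k\setminus \Omega_{k+1})}{|Q|}.
\]
The heart of the argument is a geometric decay estimate of the form $w(\Omega_k) \le (1-\eta)^k \int_{\R^n}(M\car Q)^p w$ with $\eta \sim 1/([w]_{C_p}+1)$. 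The basic input is that $Q_j^k \subset Q$ gives $M\car{Q_j^k} \le M\car Q$ pointwise, so the Fujii–Wilson definition of $[w]_{C_p}$ applied on $Q_j^k$ reads
\[
w(Q_j^k) \le \int_{Q_j^k} M(w\car{Q_j^k}) \le [w]_{C_p} \int_{\R^n}(M\car Q)^p w,
\]
and a careful iteration between consecutive generations $\Omega_k, \Omega_{k+1}$, exploiting that $|\Omega_{k+1}|/|\Omega_k|$ is small when the averages jump by a factor of $2^n$, delivers the geometric contraction.

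Plugging the decay into the sum and choosing $\delta = 1/(c_{n,p}([w]_{C_p}+1))$ with $c_{n,p}$ large enough that $2^{n\delta}(1-\eta) \le 1/2$, the resulting geometric series is comparable to $1/\eta$, and the $C_p$ condition applied directly to $Q$ absorbs the leading term $w_Q^{1+\delta}$. Raising to the power $1/(1+\delta)$ and using $2^{1/(1+\delta)} \le 2$ yields the claim. The main obstacle is producing the geometric decay of $w(\Omega_k)$ with the correct linear dependence $\delta^{-1} \sim [w]_{C_p}+1$: unlike the $A_\infty$ case where the Fujii–Wilson bound $\int_Q M(w\car Q) \le [w]_{A_\infty} w(Q)$ immediately yields the one-step contraction, the $C_p$ bound is non-local (it integrates $(M\car Q)^p$ over all of $\R^n$), so the iteration must be arranged to prevent the non-locality from accumulating across generations while still extracting the sharp linear exponent.
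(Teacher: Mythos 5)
The paper does not actually prove Theorem~\ref{SHARP-RHI-Cp}; it is quoted from \cite{CCF}, so there is no in-paper argument to compare against. Your overall plan---mimic the Hyt\"onen--P\'erez reverse H\"older proof for $A_\infty$ with $[w]_{C_p}$ replacing $[w]_{A_\infty}$ and $\int_{\R^n}(M\car Q)^pw$ replacing $w(Q)$---is the right guiding principle, and the easy parts of your sketch are fine: $w\le M^d_Qw$ a.e., the layer-cake split, $w_Q\le \frac{1}{|Q|}\int_{\R^n}(M\car Q)^pw$ (since $M\car Q\ge\car Q$), and the observation that $M\car{Q_j^k}\le M\car Q$ for $Q_j^k\subset Q$.

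However, the step you yourself label ``the heart of the argument'' --- the geometric decay $w(\Omega_k)\le(1-\eta)^k\int_{\R^n}(M\car Q)^pw$ with $\eta\sim 1/([w]_{C_p}+1)$ --- is not established, and the mechanism you gesture at for it does not work. In the $A_\infty$ proof the one-step contraction $w(\Omega_{k+1})\le(1-\eta)w(\Omega_k)$ comes from applying the Fujii--Wilson bound \emph{locally} on each $Q_j^k$ and re-expressing the right-hand side back in terms of $w(Q_j^k)$: one gets $\int_{Q_j^k}M^d_{Q_j^k}w\le[w]_{A_\infty}w(Q_j^k)$, which telescopes. In the $C_p$ setting the corresponding inequality reads $\int_{Q_j^k}M(w\car{Q_j^k})\le[w]_{C_p}\int_{\R^n}(M\car{Q_j^k})^pw$, and the right-hand side is \emph{not} comparable to $w(Q_j^k)$: $C_p$ weights need not be doubling, can vanish on sets of positive measure, and in general carry no local $A_\infty$ information. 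So the Fujii--Wilson estimate applied generation by generation produces a family of non-local quantities $\int_{\R^n}(M\car{Q_j^k})^pw$ that neither sum to the parent quantity nor contract; the ``careful iteration exploiting that $|\Omega_{k+1}|/|\Omega_k|$ is small'' only yields Lebesgue-measure decay $|\Omega_{k+1}|\le\tfrac12|\Omega_k|$ (for $a=2^{n+1}$), and combining that with $w(\Omega_k)\le 2^na^kN|\Omega_k|$ makes the series $\sum_k a^{k\delta}w(\Omega_k)$ diverge. Unweighted decay of $|\Omega_k|$ alone cannot replace the weighted contraction. In short, the very non-locality you flag as ``the main obstacle'' is not merely a bookkeeping nuisance: it invalidates the contraction you rely on, and the sketch contains no substitute mechanism. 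The proof in \cite{CCF} must circumvent this in a different way (keeping the global quantity $\int_{\R^n}(M\car Q)^pw$ fixed throughout, or invoking the Marcinkiewicz-type control $\sum_j(M\car{Q_j})^p$ that appears in \cite{SCF,CCF}), and until that step is supplied your argument has a genuine gap.
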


\subsection{Exponential estimates}

\begin{proposition}\label{EXPONENTIAL}
Suppose that $(X,\mu)$ is a probability space and $f$ a non-negative function such that for every $1\leq p<\infty$ we have the $L^p$ bound
\begin{equation*}
\left( \int_X f(x)^p d\mu \right)^\frac{1}{p} \leq \gamma \, p,
\end{equation*}
for some constant $\gamma$ independent from $p$. Then $f\in \exp(L)(X,\mu)$, meaning 
$$\mu( \{x\in X : f(x)>t\}) \leq  e^{-\frac{\scriptstyle t}{\scriptstyle 4 \gamma}}, \quad t>0.$$ 
\end{proposition}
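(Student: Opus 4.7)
The plan is to combine Chebyshev's (Markov's) inequality with the given $L^{p}$ control on $f$ and then optimize the exponent $p$ as a function of the level $t$. For every $p\geq 1$, the hypothesis gives
$$\mu(\{x\in X:f(x)>t\}) \;\leq\; t^{-p}\int_X f^{p}\,d\mu \;\leq\; \left(\frac{\gamma p}{t}\right)^{p}.$$
The unconstrained minimizer of the right-hand side over $p>0$ is $p^{*}=t/(e\gamma)$, which is admissible as soon as $t\geq e\gamma$ and which yields $\mu(\{f>t\})\leq e^{-t/(e\gamma)}$. Since $e<4$, this already implies $\mu(\{f>t\})\leq e^{-t/(4\gamma)}$ in that range.

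To make the constant match the statement with a single clean choice, I would instead take $p=t/(4\gamma)$ whenever $t\geq 4\gamma$ (ensuring $p\geq 1$), so that
$$\left(\frac{\gamma p}{t}\right)^{p} \;=\; \left(\frac{1}{4}\right)^{t/(4\gamma)} \;=\; e^{-(\ln 4)\,t/(4\gamma)} \;\leq\; e^{-t/(4\gamma)},$$
using $\ln 4 > 1$. In the complementary range $0<t<4\gamma$, the trivial probability bound $\mu(\{f>t\})\leq\mu(X)=1$ combined with the elementary inequality $1\leq e\cdot e^{-t/(4\gamma)}$ closes the estimate, up to an absolute multiplicative constant that does not affect the use of the proposition in Corollary \ref{COROLARIO}.

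The main obstacle is purely organizational rather than mathematical: the Chebyshev-with-optimal-$p$ scheme is classical, so the real content lies in choosing $p$ linearly in $t$ so that the resulting exponent lands exactly at $t/(4\gamma)$, and in patching together the two regimes (large $t$ via moment optimization, small $t$ via the probability bound). No deeper tool is required, which is precisely why this statement functions as a packaging lemma converting the $p$-dependent moment bound $\|f\|_{L^{p}}\leq\gamma p$ into exponential integrability, as needed in Corollary \ref{COROLARIO} and Remark after it.
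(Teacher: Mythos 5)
Your proof is correct, and it takes a genuinely different (though classically equivalent) route from the paper's. You apply Chebyshev's inequality to a single power $f^p$ and then optimize $p$ as a function of $t$ — the ``optimized moment'' or Chernoff-lite scheme. The paper instead forms the exponential moment at once: it sums the power series to show
$\int_X \bigl(\exp(f/(4\gamma))-1\bigr)\,d\mu \leq \sum_{n\geq 1}\frac{1}{n!}\bigl(\tfrac{n}{4}\bigr)^n \leq 1$,
and then applies Markov to the nonnegative function $\exp(f/(4\gamma))-1$. Your version avoids having to estimate a series, at the price of handling the small-$t$ regime by hand; the paper's version packages all powers simultaneously, which gives a uniform estimate but requires the (easy) numerical check on the series. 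The two methods are two faces of the same coin, since optimizing Markov over $p$ is asymptotically equivalent to Markov applied to the moment generating function.

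One small honesty point, which applies to both arguments: what you actually prove is $\mu(\{f>t\}) \leq C\,e^{-t/(4\gamma)}$ with $C=e$ (arising from the range $0<t<4\gamma$), not the constant $1$ claimed in the displayed statement. The paper's own proof has the same feature — its last line reads $2e^{-t/(4\gamma)}$, not $e^{-t/(4\gamma)}$ — and indeed every downstream use in the paper (Corollary~\ref{COROLARIO}, the Appendix) carries a harmless absolute constant. You flag this explicitly, which is the right thing to do; the gap is in the stated constant of the Proposition, not in your argument. If you wanted to improve your constant without two cases, you could take $p=\max\{1, t/(4\gamma)\}$ directly and check both branches, but that buys nothing substantive.
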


\begin{proof}
We compute
\begin{align*}
\int_X \left(\exp  \frac{f(x)}{4\gamma}  -1 \right)d\mu &
 = \sum_{n=1}^\infty \frac{1}{n!} \int_X \left(\frac{f(x)}{4\gamma}\right)^n d\mu 
 \leq \sum_{n=1}^\infty \frac{1}{n!}\left(\frac{n}{4}\right)^n \leq 1.
\end{align*}
Therefore,
\begin{align*}
\mu( \{x\in X: f(x) >t\}) & 
= \mu( \{x \in X : \frac{f(x)}{4\gamma} - \frac{t}{4\gamma}-\log 2>\log2\}) \\ 
& \leq \int_X \left( \exp\big( \frac{f(x)}{3\gamma}-\frac{t}{4\gamma}-\log 2\big) -1\right) d\mu \\
& = 2 e^{-\frac{ \scriptstyle t}{\scriptstyle 4\gamma}}\int_X \left( \exp \frac{f(x)}{4\gamma} -1\right) d\mu. \qedhere
\end{align*}
\end{proof}

Here we present a minimization lemma that we will use in the proofs of Theorems \ref{Teorema1} and \ref{ThmBloomBMO}, as well as Proposition \ref{proposicion-JN}.
\begin{lemma}\label{minimization}
Let $0<\alpha <\infty$. Then
$$
\min_{1<t<\infty}\, t\, \frac{ t^\alpha}{t^\alpha-1} \leq e \left(1+\frac1\alpha \right).
$$
\end{lemma}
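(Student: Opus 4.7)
The plan is to prove this by exhibiting a single convenient value of $t$ rather than rigorously solving the minimization problem; any specific choice of $t>1$ automatically gives an upper bound on the minimum.

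First I would denote $g(t) = t \cdot \frac{t^\alpha}{t^\alpha - 1}$ and identify a good test point. A short calculus computation of $\frac{d}{dt} \log g(t) = \frac{\alpha+1}{t} - \frac{\alpha t^{\alpha-1}}{t^\alpha-1}$ reveals that the critical point occurs at $t^\alpha = \alpha+1$. Without needing to justify that this critical point is indeed the minimum, I will simply evaluate $g$ at $t_0 = (\alpha+1)^{1/\alpha}$. A direct substitution gives
\begin{equation*}
g(t_0) = (\alpha+1)^{1/\alpha} \cdot \frac{\alpha+1}{(\alpha+1)-1} = (\alpha+1)^{1/\alpha} \left(1 + \frac{1}{\alpha}\right).
\end{equation*}

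Next, I need the elementary estimate $(\alpha+1)^{1/\alpha} \leq e$ valid for all $\alpha>0$. This follows immediately by taking logarithms: the claim is $\log(1+\alpha) \leq \alpha$, which is the standard concavity bound on $\log$ at the point $1$. Combining this with the expression above yields
\begin{equation*}
\min_{1<t<\infty} g(t) \leq g(t_0) \leq e\left(1+\frac{1}{\alpha}\right),
\end{equation*}
which is exactly the claim.

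There is no real obstacle here; the only thing to recognize is the correct test value $t_0 = (\alpha+1)^{1/\alpha}$. One could alternatively guess $t_0$ by noting that to get something comparable to $e$ from the factor $t$, one wants $t^\alpha$ to be of order $\alpha+1$ (so that the factor $t^\alpha/(t^\alpha - 1)$ is $1+1/\alpha$), and then the elementary logarithmic inequality takes care of the $t$ factor itself.
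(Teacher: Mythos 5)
Your proof is correct. The paper states this lemma without proof, so there is no "paper's approach" to compare against; your argument is the natural one. The key steps all check out: the test point $t_0 = (\alpha+1)^{1/\alpha}$ lies in $(1,\infty)$ since $\alpha>0$, the evaluation $g(t_0) = (\alpha+1)^{1/\alpha}\bigl(1+\frac{1}{\alpha}\bigr)$ is an algebraic identity, and the bound $(\alpha+1)^{1/\alpha}\le e$ is equivalent to $\log(1+\alpha)\le\alpha$, which holds for all $\alpha>0$. You correctly observe that no rigorous minimization argument is needed since any admissible test point furnishes an upper bound for the minimum; the derivative computation serves only as motivation for the choice of $t_0$, and your proof would be equally valid (if less well motivated) with that paragraph removed.
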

%

\section{Karagulyan's result revisited}\label{PruebaTeorema1}

 In this section we give a new inequality that can be seen as an extension of the JN theorem that requires minimal hypothesis on the function. This new inequality concerns the dyadic maximal operator and the sharp maximal operator. Given a cube $Q$ we define $M_Q$, the \emph{localized dyadic maximal operator}, acting on a function $h$ by
\begin{equation*}
M_Qh(x) = \sup_{\substack{R\in \mathcal D(Q)\\x\in R}} \frac{1}{|R|}\int_R |h|.
\end{equation*}
Here $\mathcal D(Q)$ denotes the collection of all dyadic descendants of $Q$. We don't need that $Q$ belongs to any particular dyadic family, even though the supremum is taken over the dyadic collection generated by $Q$.

\begin{proof}[Proof of Theorem \ref{Teorema1}] The main idea for the proof comes from \cite{PR-Poincare}. 
Fix a cube $Q$. We make the local Calder\'on--Zygmund decomposition in the cube $Q$ of the function \[ F(x)= \frac{|f(x)-f_Q|}{\text{osc}(f,Q)}\] at height $\la>1$ to be precised later. We have used the notation 
$$\text{osc}(f,Q) = \avgint_Q|f-f_Q|.$$ 
More precisely, we choose the dyadic subcubes $\{Q_j\}$ of $Q$, maximal for the inclusion among the cubes $R$  that satisfy 
$\avgint_R F(x)dx >\la$. The cubes $\{Q_j\}$ are pairwise disjoint and satisfy the following properties:
\begin{itemize}
\item  $ \text{osc}(f,Q) \la < \avgint_{Q_j} {|f-f_Q|} \leq 2^n \la\ \text{osc}(f,Q) $,
\item  $ \sum_j |Q_j| \leq \frac{|Q|}{\la}$,
\item  For $  x\not \in \cup_j Q_j,\quad  \displaystyle {M_Q(f-f_Q)(x)}\leq \la\ {\text{osc}(f,Q)} .$
\end{itemize}
The first two properties follow from the stopping time and the maximality. To prove the third one, note that $\avgint_R |f-f_Q| / \text{osc}(f,Q) \leq \la$ for all dyadic $R$ that contains $x$.

Now, by maximality of the cubes, for $x\in Q_j$ we can localize the maximal function in the following way
\begin{equation}\label{localizador}{M_Q(f-f_Q)(x)}  = M_{Q_j}(f-f_Q)(x) \leq M_{Q_j}(f-f_{Q_j})(x)+|f_Q-f_{Q_j}|.
\end{equation}
Moreover, for $x \in Q_j$, we have
\begin{equation}\label{C-Zlocalizado}
\frac{|f_Q-f_{Q_j}|}{M^\sharp f(x) } = \frac{| \avgint_{Q_j}(f-f_Q)|}{M^\sharp f(x)} \leq \frac{\avgint_{Q_j} |f-f_Q|}{\text{osc}(f,Q)} \leq 2^n \la,
\end{equation}
by the Calder\'on--Zygmund decomposition. Thus, we have found the following pointwise bound, for a.e. $x\in Q,$
\begin{align*}
\frac{M_Q(f-f_Q) (x)}{M^\sharp f(x)} &  = \frac{M_Q(f-f_Q) (x)}{M^\sharp f(x)}\car{Q\setminus \cup_j Q_j}(x) + \sum_j \frac{M_{Q}(f-f_{Q}) (x)}{M^\sharp f(x)} \car{Q_j}(x)  \\
& \leq \la \car{Q\setminus \cup_j Q_j}(x) +\sum_j \left( \frac{M_{Q_j}(f-f_{Q_j}) (x)}{M^\sharp f(x)} + \frac{|f_Q-f_{Q_j}|}{M^\sharp f(x) } \right) \car{Q_j}(x) \\
& \leq \la \car{Q\setminus \cup_j Q_j}(x) +\sum_j \left( \frac{M_{Q_j}(f-f_{Q_j}) (x)}{M^\sharp f(x)} + 2^n \la\right) \car{Q_j}(x) \\
& \leq  2^n \la + \sum_j \frac{M_{Q_j}(f-f_{Q_j}) (x)}{M^\sharp f(x)}  \car{Q_j}(x).
\end{align*}
We have used \eqref{localizador} and \eqref{C-Zlocalizado} in the first  and second inequalities respectively.

Now we compute the norm. Using the triangular inequality, Jensen's inequality and the fact that the $Q_j$ are pairwise disjoint, we get
\begin{align*}
 \Big( \frac{1}{w_r(Q)}\int_Q & \Big( \frac{M_Q(f-f_Q)}{M^\sharp f} \Big)^p\,  w(x)dx \Big)^{\frac1p} \\
 & \leq 2^n \la + \left( \sum_j \frac{w_r(Q_j)}{w_r(Q)} \frac{1}{w_r(Q_j)}\int_Q \left( \frac{M_{Q_j}(f-f_{Q_j})}{M^\sharp f(x)} \right)^p \,w(x)dx\right)^\frac1p \\
& \leq 2^n\la +X \left(  \frac{1}{w_r(Q)}\sum_j w_r(Q_j) \right)^\frac1p \\
& \leq 2^n \la +\frac{X}{\la^{\frac{1}{pr'}}}. 
\end{align*}
We have used that, by H\"older's and one of the main properties of the family $Q_j$,
\begin{equation*}
\sum_{j} w_r(Q_j) 
\leq   w_r(Q)  \left (\frac{1}{\lambda}\right )^{\frac{1}{r'}},
\end{equation*}
and we have set  
\[ X =   \sup_{R \in \mathcal D} \left( \frac{1}{w_r(R)} \left( \frac{M_R(f-f_R)}{M^\sharp f}\right)^p \,w(x)dx\right)^{\frac1p}. \]

Now take the supremum over all dyadic cubes $Q$ and obtain, for arbitrary $\la>1$,
$$X \leq 2^{n} \la +\frac{X}{\la^{\frac{1}{pr'}}}. $$
This in turn implies, if we assume $X<\infty$, that
$$X\leq 2^n \la \frac{ \la^{\frac{1}{pr'}} }{\la^{\frac{1}{pr'}}-1} .$$
Applying  Lemma \ref{minimization}, we see  $X\leq c_n \, pr'$, since $\lambda >1$ was free.
This finishes the proof if we assume that $X<\infty$.

In order to remove the hypothesis $X<\infty$, it is enough to work with 
$$X_{\eps,K} := \sup_{Q\in \mathcal D} \left( \frac{1}{w_r(Q)}\int_Q \left( \frac{M_Q(f_K-(f_K)_Q)}{M^\sharp (f_K)+\eps}\right)^p wdx\right)^{\frac1p}\leq 2 \frac K \eps <\infty$$
for a suitable truncation $f_K$ of $f$ at height $K$. For example, one can take
\begin{equation*}
f_K(x) = \begin{cases}
-K, & f(x) < -K, \\
f(x), & -K \leq f(x) \leq K,\\
K, & K < f(x).
\end{cases}
\end{equation*}
Making the same computations as above with some trivial changes, we can obtain the bounds for $X_{\eps,K}$ independently of $\eps$ and $K$. Finally, monotone convergence finishes the argument.
\end{proof}

\begin{remark} \label{REMARK-DIADICO}
Since throughout the proof the only cubes that appear are dyadic descendants of $Q$, we actually obtain the stronger estimate
\[
\left( \frac{1}{w_r(Q)}\int_Q \left(\frac{M_Q(f-f_Q)(x)}{M_Q^\sharp f(x)}\, \right)^p w(x)dx\right)^\frac{1}{p} \leq c_n \, pr',
\]
where $M_Q^\sharp$ is the sharp operator taking the supremum over dyadic descendants of $Q$. Since $M_Q^\sharp \leq M^\sharp,$ this last estimate is stronger.
\end{remark}

Finally, if $w\in A_\infty,$ we choose $r = 1+\delta$ with $\delta$ as in Theorem \ref{SHARP-RHI-Ainfty}. This way, $w_r(Q) \leq 2 w(Q)$ and $r'\lesssim [w]_{A_\infty}$. From this observation, the $A_\infty$ estimate in Theorem \ref{Teorema1} follows.

\section{Weighted local mean oscillation}

Here we give the proof of Theorem \ref{bumped-muckenhoupt-wheeden} involving {bump} conditions on the weight. For a weight $w$, a cube $Q$ and $r>1$, we defined the bump functional
$$w_r(Q)= |Q| \left( \avgint_Q w^r\right)^\frac{1}{r} = |Q|^{\frac{1}{r'}} \left( \int_Q w^r\right)^\frac 1r,$$
and the bumped $A_p$ constant
$$
[w]_{A^r_p} = \sup_Q \left( \avgint_Q w^r\right)^\frac1r \left( \avgint_Q w^{1-p'}\right)^{p-1}.
$$

\begin{proof}[Proof of Theorem \ref{bumped-muckenhoupt-wheeden}]

For a fixed a cube $Q$, we have to prove
\begin{equation}
\left(\frac{1}{w_r(Q)}\int_Q \left( \frac{|f(x)-f_Q|}{w(x)}\right)^{p'} w(x)dx\right)^\frac{1}{p'} \leq c_n p' [w]_{A^r_p}^\frac1p \,(r')^\frac{1}{p'} \|f\|_{\BMO_{w,r}} .
\end{equation}
We recall that 
$$\| f \|_{\BMO_{w,r}}:= \sup_Q \frac{1}{w_r(Q)} \int_Q |f-f_Q|. 
$$
We may suppose by homogeneity that $\|f\|_{\BMO_{w,r}}=1$. Hence, if we let $L>1$, to be chosen later, we can choose a family of maximal subcubes $\{Q_j\}$ in $Q$ such that 
\begin{equation}\label{starting-point-bumped}
\frac{1}{w_r(Q_j)} \int_{Q_j} |f-f_Q| > L.
\end{equation}
Observe that if the family is empty we can see that \, $|f(x)-f_Q| \leq Lw(x)$\, a. e. $x\in Q$ and the result is trivial. Also since $\|f\|_{\BMO_{w,r}}=1$, we have that $Q$ is not one of the selected cubes. We can check that, if $Q_j'$ denotes the ancestor of $Q_j$, the following properties hold: 

\begin{itemize}
\item $\displaystyle \frac{1}{w_r(Q'_j)} \int_{Q'_j} |f-f_Q|\,dx\leq L $,
\item $\displaystyle |f_{Q_j}-f_{Q}|\leq 2^n\,L\, \left( \avgint_{Q_j'}w^r\right)^\frac1r $,\\
\item $\displaystyle \sum_j w_r(Q_j) \leq \frac{w_r (Q)}{L}$ \quad by \eqref{starting-point-bumped} and since since $\|f\|_{BMO_{w,r}}=1$,
\item $\displaystyle |f(x)-f_Q| \leq Lw(x)$ \ for a.e. $x\not \in \cup_j Q_j$.\\
\end{itemize} 

Using the disjointness, we have for $a.e. \ x\in Q$,
\begin{align*}
f(x)-f_Q & = (f(x)-f_Q) \car {(\cup_j Q_j)^c} (x) + \sum_{j} (f_{Q_j}-f_Q) \car{Q_j}(x) + \sum_j (f(x)-f_{Q_j})\car{Q_j}(x) \\
&= A_1(x) + A_2(x) + B(x).
\end{align*}

Since $p'> 1$ we can use the triangular inequality to get
\begin{align*}
\left( \frac{1}{ w_r(Q) } \int_{ Q }   \left(\frac{|f -f_{Q}|}{w}\right)^{p'}     \,wdx\right)^{\frac{1}{ p'} } 
& \leq \left( \frac{1}{w_r(Q)}\int_{(\cup_j Q_j)^c} \left(\frac{|f-f_Q|}{w}\right)^{p'}w \right)^\frac{1}{p'} \\
& \quad + \left( \frac{1}{w_r(Q)}\sum_j \int_{Q_j} \left(\frac{|f_{Q_j}-f_Q|}{w}\right)^{p'}w \right)^\frac{1}{p'} \\
& \quad +\left( \frac{1}{w_r(Q)}\sum_j\int_{Q_j} \left(\frac{|f-f_{Q_j}|}{w}\right)^{p'}w \right)^\frac{1}{p'} \\
& =  A_1+A_2+B.
\end{align*}

Now, since $w(Q)\leq w_r(Q)$ the first term is $A_1\leq L$. To bound $B$  we denote
$$X = \sup_R \left( \frac{1}{w_r(R)}\int_R \left( \frac{|f-f_R|}{w}\right)^{p'} w\right)^\frac{1}{p'}.$$
and use that $\sum_j w_r(Q_j) \leq \frac{w_r (Q)}{L}$,   the third property of the family of the cubes $\{Q_j\}$, to obtain: 
\begin{align*}
B & \leq  X \left( \frac{1}{w_r(Q)}\sum_j w_r(Q_j)\right)^\frac{1}{p'}
\leq X \left( \frac1L \right)^\frac{1}{p'}.
\end{align*}

The argument for bounding $A_2$ is more delicate. We start the computations:
\begin{align*}
A_2 & = \left(\frac{1}{w_r(Q)} \sum_j\int_{Q_j}|f_{Q_j }-f_Q|^{p'}w^{p'-1}\right)^\frac{1}{p'} \\
&\leq 2^n L \left( \frac{1}{w_r(Q)} \sum_j \left( \frac{1}{|Q_j'|}\int_{Q_j'} w^r \right)^\frac{p'}r\int_{Q_j}w^{p'-1}\right)^\frac{1}{p'} \\
& \leq 2^n L \left( \frac{1}{w_r(Q)} \sum_j  w_r(Q_j') \left( \frac{1}{|Q'_j|}\int_{Q_j'} w^r \right)^\frac{p'-1}{r}\left( \frac{1}{|Q_j'|}\int_{Q_j}w^{p'-1}\right)^{(p'-1)(p-1)}  \right)^\frac{1}{p'} \\
& \leq 2^n L [w]_{A^r_p}^\frac 1p \left(\frac{1}{w_r(Q)}\sum_j |Q_j'| \left( \frac{1}{|Q_j'|} \int_{Q_j'}w^r\right)^\frac1r \right) ^ \frac{1}{p'}.
\end{align*}

In order to bound the term in the sum, we recall the following result by Kolmogorov. If $(X,\mu)$ is a probability space, then for $\eps<1$
\[ \| g\|_{L^{\eps}(X)} \leq \left( \frac{1}{1-\eps} \right)^\frac 1\eps \, \|g\|_{L^{1,\infty}(X)} .\]
We have 
\begin{align*}
\sum_j |Q_j'| \left( \frac{1}{|Q_j'|} \int_{Q_j'}w^r\right)^\frac1r & 
\leq 2^n  \sum_j |Q_j| \inf_{z\in Q_j} M_Q(w^r\car Q)(z)^\frac 1r\\
& \leq 2^n |Q| \avgint_Q M_Q(w^r \car Q)^\frac 1r \\
& \leq 2^n \frac{1}{1-\frac1r} \| M_Q(w^r\car Q)\|_{L^{1,\infty}(Q, \frac{dx}{|Q|})}^\frac 1r |Q| \\
& \leq 2^n r' |Q| \left( \avgint_Q w^r\right)^\frac 1r \\
& = 2^n \, r' \, w_r(Q),
\end{align*}
where $M_Q$ is as before the local dyadic maximal operator over $Q$ whose weak type $(1,1)$ bound is one.
Thus, we have the bound
$$ A_2 \leq 2^n \,[w]_{A^r_p}^\frac 1p (r')^\frac{1}{p'} \, L.$$

Combining the bounds for $A_1$, $A_2$ and $B,$ we have for every cube $Q$ and $L>1$
\begin{equation}\label{KeyEstimate}
\left( \frac{1}{ w_r(Q) } \int_{ Q }   \left(\frac{|f -f_{Q}|}{w}\right)^{p'}     \,wdx\right)^{\frac{1}{ p'} } 
\leq L+2^n \,[w]_{A^r_p}^\frac 1p (r')^\frac{1}{p'} \, L+ X \left( \frac1L \right)^\frac{1}{p'} 
\end{equation}
and thus for each L
$$
X \leq 2^{n+1} \,[w]_{A^r_p}^\frac 1p (r')^\frac{1}{p'} \, L+ X \left( \frac1L \right)^\frac{1}{p'}. 
$$
Hence, if we assume $X<\infty$, 
$$
X \leq c_n \,p'[w]_{A^r_p}^\frac 1p (r')^\frac{1}{p'},
$$
This finishes the proof in the case that $X<\infty$. In order to remove the hypothesis $X<\infty$, it is enough to replace first for each cube $Q$
$$
\left( \frac{1}{ w_r(Q) } \int_{ Q }   \left(\frac{|f -f_{Q}|}{w}\right)^{p'}     \,wdx\right)^{\frac{1}{ p'} } 
$$
by 
$$
\left( \frac{1}{w_r(Q)}\int_Q \min \Big\{ \frac{|f -f_{Q}|}{w},m \Big\}^{p'} wdx\right)^{\frac{1}{p'}}.
$$
The argument done before works exactly to get the following variant of \eqref{KeyEstimate}: For every $L>1$ and $m=1, \cdots,$
$$
\left( \frac{1}{w_r(Q)}\int_Q \min \Big\{ \frac{|f -f_{Q}|}{w},m \Big\}^{p'} wdx\right)^{\frac{1}{p'}}\leq  L+2^n \,[w]_{A^r_p}^\frac 1p (r')^\frac{1}{p'} \, L+ X_m \left( \frac1L \right)^\frac{1}{p'},
$$
where now, instead of $X$ we have $X_m$ defined by: 
$$X_{m} := \sup_{Q\in \mathcal D} \left( \frac{1}{w_r(Q)}\int_Q \min \Big\{ \frac{|f -f_{Q}|}{w},m \Big\}^{p'} wdx\right)^{\frac{1}{p'}} \qquad m=1,\cdots.$$
Then,
$$
X_m \leq 2^{n+1} \,[w]_{A^r_p}^\frac 1p (r')^\frac{1}{p'} \, L+ X_m  \left( \frac1L \right)^\frac{1}{p'} \qquad L>1, \: m=1,\cdots .
$$
Therefore, since  $X_{m}\leq  m$ we have 
$$
X_m \leq c_n \,p'[w]_{A^r_p}^\frac 1p (r')^\frac{1}{p'} \qquad m=1,\cdots.  
$$
Hence for each cube $Q$
$$
\left( \frac{1}{w_r(Q)}\int_Q \min \Big\{ \frac{|f -f_{Q}|}{w},m \Big\}^{p'} wdx\right)^{\frac{1}{p'}}\leq 
c_n \,p'[w]_{A^r_p}^\frac 1p (r')^\frac{1}{p'} 
\qquad m=1,\cdots. 
$$
Finally, let $m\rightarrow \infty$ to finish the proof.
\end{proof}

\begin{proof}[Proof of Corollary \ref{ThmBloomBMO}]
Part (1) follows from part (2) since $[w]_{A_1} \geq [w]_{A_p},$ $p>1$. In order to prove part (2), choose $r= 1+\delta$ with $\delta$ as in Theorem \ref{SHARP-RHI-Ainfty}. This way $[w]_{A_p^r} \leq 2[w]_{A_p}$, $r' \lesssim [w]_{A_\infty}$ and $w_r(Q) \leq 2w(Q)$. The result follows from Theorem \ref{bumped-muckenhoupt-wheeden}.
\end{proof}

\section{Application to $C_p$ weights} \label{Cp}

In this section we will prove Theorem \ref{Yabuta-quantif}, namely we give a quantitative weighted norm inequality between the Hardy--Littlewood maximal operator and the Fefferman--Stein maximal function, for weights in class $C_p$. We are going to use the improved John--Nirenberg Theorem \ref{Teorema1} to give a quantitative version of Theorem II in \cite{YCF}.

Before proving Theorem \ref{Yabuta-quantif}, we need to obtain a non-dyadic unweighted version of Corollary~\ref{COROLARIO}.

\begin{theorem}
\label{Teoremanodiadico}
Let $Q$ be an arbitrary cube and $f$ a locally integrable function, non constant on $Q$. Then for any $\la>0$ we have
$$\left| \left\lbrace x \in Q: \frac{M\big( (f-f_Q)\car Q\big)(x)}{M^\sharp f(x)} > \la \right\rbrace \right| \leq C e^{-c\la} |Q|,$$
where $C,c>0$ are dimensional constants. Here $M$ denotes the standard Hardy--Littlewood maximal operator.
\end{theorem}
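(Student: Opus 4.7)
The plan is to reduce the non-dyadic Hardy--Littlewood maximal operator $M$ to a finite family of dyadic ones via the standard shifted-grid (``one-third'') trick and then invoke Corollary~\ref{COROLARIO} (with $w\equiv 1$) on a finite collection of cubes of size comparable to $\ell(Q)$.

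First, by the one-third trick there exist $3^n$ shifted dyadic grids $\cd^\alpha$ such that $Mh(x)\leq c_n\sum_{\alpha} M^{\cd^\alpha}h(x)$ for every locally integrable $h$, where $M^{\cd^\alpha}$ is the dyadic maximal operator associated to $\cd^\alpha$. It therefore suffices to bound, for each $\alpha$ separately, the superlevel set $\{x\in Q:\,M^{\cd^\alpha}((f-f_Q)\car Q)(x)>tM^\sharp f(x)\}$.

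Fix $\alpha$ and split the supremum defining $M^{\cd^\alpha}((f-f_Q)\car Q)(x)$ by scales. For $R\in\cd^\alpha$ with $x\in R$ and $\ell(R)\geq\ell(Q)$ one has the harmless bound
\[
\frac{1}{|R|}\int_R|f-f_Q|\car Q \;\leq\; \frac{|Q|}{|R|}\operatorname{osc}(f,Q)\;\leq\; M^\sharp f(x).
\]
For $R\in\cd^\alpha$ with $x\in R$ and $\ell(R)<\ell(Q)$, a sup-norm computation shows $R\subset 3Q$, and all such $R$ are dyadic descendants inside $\cd^\alpha$ of the unique cube $P^\alpha(x)\in\cd^\alpha$ containing $x$ whose sidelength is the smallest dyadic scale $\geq\ell(Q)$. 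As $x$ ranges over $Q$, the cube $P^\alpha(x)$ takes at most $2^n$ distinct values, each lying in $5Q$ and of volume at most $2^n|Q|$.

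Writing $f-f_Q=(f-f_{P^\alpha(x)})+(f_{P^\alpha(x)}-f_Q)$ and using that $P^\alpha(x),Q\subset 5Q$, a two-step comparison through $f_{5Q}$ yields $|f_{P^\alpha(x)}-f_Q|\leq c_n\operatorname{osc}(f,5Q)\leq c_n M^\sharp f(x)$ for $x\in Q$. Combining the two scale regimes one obtains the pointwise estimate
\[
M^{\cd^\alpha}((f-f_Q)\car Q)(x) \;\leq\; c_n M^\sharp f(x)+M_{P^\alpha(x)}(f-f_{P^\alpha(x)})(x),\qquad x\in Q.
\]
Corollary~\ref{COROLARIO} applied with $w\equiv 1$ on each of the at most $2^n$ cubes $P^\alpha$ gives the exponential decay $|\{x\in P^\alpha:M_{P^\alpha}(f-f_{P^\alpha})(x)>sM^\sharp f(x)\}|\leq c_1 e^{-c_2 s}|P^\alpha|$, and summing over the $2^n\cdot 3^n$ resulting pieces yields the desired bound $Ce^{-c\la}|Q|$. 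The step I expect to be the main obstacle is the estimate $|f_{P^\alpha(x)}-f_Q|\lesssim M^\sharp f(x)$, since $P^\alpha(x)$ and $Q$ need not be nested; this is where the comparison through the common larger cube $5Q$ becomes essential.
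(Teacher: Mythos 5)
Your proof is correct, but it runs on a somewhat different covering lemma than the paper's, so the details differ. Both arguments dominate $M$ pointwise by a finite sum of dyadic maximal operators and then reduce to the dyadic exponential decay of Corollary~\ref{COROLARIO} (with $w\equiv 1$) on cubes comparable to $Q$. The paper, however, invokes Conde's construction (Lemma~\ref{Lema-cubo-diadico}), which produces $n+1$ dyadic grids together with, for each grid, a \emph{single} cube $Q_j\supset Q$ with $|Q_j|\simeq|Q|$. Because $Q\subset Q_j$, the comparison $|f_Q-f_{Q_j}|\le \frac{|Q_j|}{|Q|}\avgint_{Q_j}|f-f_{Q_j}|\le c_n M^\sharp f(x)$ is immediate, there is no need to split the dyadic maximal function by scales, and $M_j\big((f-f_Q)\car Q\big)(x)$ localizes to $M_{Q_j}(f-f_Q)(x)$ in one step. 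You instead use the generic $3^n$ shifted grids from the one-third trick, which do not hand you a containing cube, so you must (i) discard the scales $\ge\ell(Q)$ by the trivial bound $\le M^\sharp f(x)$, (ii) observe that for a fixed grid the remaining small-scale cubes through $x\in Q$ live inside one of at most $2^n$ cubes $P^\alpha(x)$ of sidelength comparable to $\ell(Q)$, and (iii) compare $f_Q$ and $f_{P^\alpha(x)}$ via the common enlargement $5Q$, since $Q$ and $P^\alpha(x)$ need not be nested. All of these steps are valid (and you are right that (iii) is the place where a careless argument would break), but the net effect is a longer chain of comparisons. Both routes are sound: Conde's lemma is tailor-made for exactly this reduction and gives the cleaner proof, while yours relies on a more widely known dyadic tool at the cost of the scale-splitting and the $5Q$ detour. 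One small remark worth adding for completeness, which the paper also makes: the exponential bound is only obtained for $\la$ above a dimensional threshold, and the case of small $\la$ is absorbed by enlarging the constant $C$.
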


We will use a result from \cite{CONDE}, which will allow us to obtain the general case from the dyadic setting.

\begin{lemma}\label{Lema-cubo-diadico}
Let $Q\subset \R^n$ be a cube. Then there exist $n+1$ dyadic systems $\{\mathcal A_j\}_{j=0}^n$ and $n+1$ cubes, $Q_j \in \mathcal A_j$ such that the following two conditions are satisfied
\begin{enumerate}
\item $Mf(x) \leq c_n \sum_{j=0}^n M_j f(x)$ a.e. for any function $f$, where $M_j$ is the dyadic maximal function with respect to the dyadic system $\mathcal A_j$, $j=0,...,n$.

\item $ Q\subset \cap_{j=0}^n Q_j$ and the $|Q| \simeq |Q_j|$ for all $j$.
\end{enumerate}
\end{lemma}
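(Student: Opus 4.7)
The plan is to construct the $n+1$ dyadic systems by appropriate translations of the standard dyadic lattice $\mathcal{D}$, following the shifted-lattice approach of Conde-Alonso (and its precursors by Okikiolu, Mei, and Hyt\"onen-P\'erez). Concretely, for each $j \in \{0, 1, \ldots, n\}$, I would set $\mathcal{A}_j = \mathcal{D} + \alpha_j$ for a vector $\alpha_j \in \mathbb{R}^n$ to be chosen. The governing principle is the classical combinatorial covering property: for a suitable choice of translations $\alpha_0, \ldots, \alpha_n$, every cube $R \subset \mathbb{R}^n$ is contained in a dyadic cube $R_j \in \mathcal{A}_j$ for at least one $j$, with $\ell(R_j) \leq C_n \ell(R)$. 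A standard one-dimensional prototype takes three translates spaced by multiples of $1/3$ at each scale; in higher dimensions one exploits a suitable diagonal direction so that $n+1$ shifts suffice.

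Once that covering property is in hand, property (1) is essentially automatic. Given any cube $R$ containing $x$, pick the index $j$ provided by the covering lemma and write
\[
\frac{1}{|R|} \int_R |f|\,dy \;\leq\; \frac{|R_j|}{|R|}\cdot \frac{1}{|R_j|}\int_{R_j} |f|\,dy \;\leq\; C_n \, M_j f(x),
\]
since $x \in R \subset R_j$ and $|R_j| \leq C_n^n |R|$. Taking the supremum over all cubes $R$ containing $x$ and distributing across the $j$'s yields $M f(x) \leq C_n \sum_{j=0}^n M_j f(x)$ a.e.

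For property (2), the idea is to exploit the freedom in choosing the translations $\alpha_j$ in terms of the fixed cube $Q$. I would pick the common scale $k$ with $2^k$ comparable to $\ell(Q)$, and then for each $j$ select the shift $\alpha_j$ so that $Q$ sits near the center of a dyadic cube $Q_j \in \mathcal{A}_j$ of side length a fixed dimensional multiple of $\ell(Q)$; by construction $Q \subset Q_j$ and $|Q_j| \simeq |Q|$ for every $j$. The main obstacle is that these two requirements pull in opposite directions: condition (1) needs the $n+1$ shifts to be \emph{spread apart} so that together they cover all cubes up to bounded ratio, while condition (2) wants them all \emph{aligned} with the single cube $Q$. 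The resolution is that, given $Q$, we may translate all $n+1$ Conde lattices by a common vector $\sigma = \sigma(Q)$ without altering their relative positions, and hence without disturbing the combinatorial covering property underlying (1); the common shift $\sigma$ can then be tuned so that $Q$ is centrally placed inside a comparable dyadic cube in \emph{each} of the translated lattices. Verifying that this simultaneous choice works (and pinning down the dimensional constants) is the technical heart of the argument, and would be my main concern in writing a fully rigorous proof.
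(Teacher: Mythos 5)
Your approach is essentially the same as the paper's: apply an affine change of variables to $Q$ and invoke Conde's $n+1$ shifted dyadic systems, the only substantive step being to verify that the normalization simultaneously places $Q$ inside comparable cubes of all $n+1$ systems. The paper makes this explicit: after translation and dilation one may assume $Q = [\frac{p_n-1}{p_n},1]^n$, with $p_n$ the smallest odd integer strictly greater than $n$, and then Conde's starting cubes $Q_{00}^j = [0,1]^n + \frac{j}{p_n}(1,\ldots,1)$ all contain $Q$ with $|Q_{00}^j| = p_n^n|Q|$ (note that $Q$ sits in a \emph{corner} of $\bigcap_j Q_{00}^j$, not centrally placed as you suggest, though this does not affect the argument).
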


\begin{proof}
Given the cube $Q$, we construct the dyadic systems as in Theorem A in \cite{CONDE}, but with a slight change on the starting cubes. 

We choose the cubes $Q_{00}^j$ so that $Q = \cap_j Q_{00}^j$. This is possible by construction, after making a translation and dilation. 
Indeed, we may suppose $Q=[\frac{p_n-1}{p_n},1]^n$, $p_n$ being the smallest odd integer strictly greater than $n$. Following  the proof in \cite{CONDE}, we have $Q_{00}^j=[0,1]^n+\frac{j}{p_n}(1,..,1)$. 
These cubes satisfy property $(2)$. Then call $Q_j = Q^j_{00}$ and apply the same procedure as in \cite{CONDE}.
\end{proof}

\begin{proof}[Proof of Theorem \ref{Teoremanodiadico}]
Fix the cube $Q$ and the function $f$, and choose $Q_0,...,Q_n$ and $\mathcal A_0,...,\mathcal A_n$ as in Lemma \ref{Lema-cubo-diadico}. We have
\begin{align*}
\left| \left\lbrace x \in Q: \frac{M\big( (f-f_Q)\car Q\big)}{M^\sharp f} > \la \right\rbrace \right| & 
\leq \sum_{j=0}^n \left| \left\lbrace x \in Q: \frac{M_j\big( (f-f_Q)\car Q\big)}{M^\sharp f} > \frac{\la}{n+1} \right\rbrace \right| \\
& \leq \sum_{j=0}^n \left| \left\lbrace x \in Q_j: \frac{M_j\big( (f-f_Q)\car {Q_j}\big)}{M^\sharp f} > \frac{\la}{n+1} \right\rbrace \right| \\
&= \sum_{j=0}^n \left| \left\lbrace x \in Q_j: \frac{M_{Q_j}(f-f_Q)}{M^\sharp f} > \frac{\la}{n+1} \right\rbrace \right|.
\end{align*}

Now, since $Q$ and $Q_j$ have comparable size, we have for $x\in Q_j$,
\begin{align*}
\frac{|f_Q-f_{Q_j}|}{M^\sharp f(x)} \leq \frac{|Q_j|}{|Q|}\frac{\avgint_{Q_j}|f-f_{Q_j}|}{\avgint_{Q_j}|f-f_{Q_j}|} \leq c_n.
\end{align*}
So, for $\frac{\la}{n+1} \geq c_n$ we get for each $j$,
\begin{align*}
\left| \left\lbrace x \in Q_j: \frac{M_{Q_j}(f-f_Q)}{M^\sharp f} > \frac{\la}{n+1} \right\rbrace \right| & 
\leq \left| \left\lbrace x \in Q_j: \frac{M_{Q_j}(f-f_{Q_j})}{M^\sharp f} + c_n > \frac{\la}{n+1} \right\rbrace \right|\\
&\leq \left| \left\lbrace x \in Q_j: \frac{M_{Q_j}(f-f_{Q_j})}{M^\sharp f} > \frac{\la}{n+1}-c_n \right\rbrace \right| \\ 
& \leq C e^{-c (\la-c_n) } |Q_j| \leq C e^{-c (\frac{\la}{n+1}-c_n) } |Q|.
\end{align*}
This finishes the proof for $\frac{\la}{n+1}>c_n$. The other case follows since in that case $e^{-\la}$ is bounded from bellow.
\end{proof}

We now give the key estimate, which is a good-$\la$ estimate between $M$ and $M^\sharp$ with exponential decay.

\begin{proposition}\label{good-lambda-exp}
Let $f$ be a function and $\la>0$. Let $\Omega_\la =\{ Mf(x) >\la \}= \cup Q$ as in the Whitney decomposition. Then for any $Q$ in the decomposition and $\gamma$ small enough,
$$|\{x\in Q : Mf(x)>4^n \la , M^\sharp f(x) \leq \gamma \la \}| \leq C e^{-\frac{c}{\gamma}} |Q|.$$ 
\end{proposition}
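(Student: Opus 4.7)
The plan is to reduce this good-$\la$ inequality to the non-dyadic exponential decay supplied by Theorem~\ref{Teoremanodiadico}, applied on a concentric comparable enlargement $\tilde Q$ of the Whitney cube $Q$. Since $Q$ is a Whitney cube of $\Omega_\la = \{Mf>\la\}$, its distance to $\R^n\setminus\Omega_\la$ is comparable to its diameter, so there exists a point $y\in\R^n$ with $Mf(y)\leq\la$ and $\mathrm{dist}(y,Q)\lesssim_n \ell(Q)$. I would take $\tilde Q$ to be the smallest concentric dilation of $Q$ containing $y$, so that $\tilde Q\ni y$ and $|\tilde Q|\simeq_n|Q|$.

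The first step is to establish the pointwise bound, valid for every $x\in Q$,
\begin{equation*}
Mf(x) \leq M\big((f-f_{\tilde Q})\car{\tilde Q}\big)(x) + C_n\la.
\end{equation*}
Any cube $R\ni x$ that is not contained in $\tilde Q$ must satisfy $\ell(R)\gtrsim_n \ell(\tilde Q)$, hence sits inside some cube $R'\supseteq R\cup\{y\}$ with $|R'|\lesssim_n |R|$; this gives $\frac{1}{|R|}\int_R|f|\leq C_n Mf(y)\leq C_n\la$. For cubes $R\subseteq\tilde Q$ one splits $f=(f-f_{\tilde Q})+f_{\tilde Q}$, and the average is controlled by $|f_{\tilde Q}|\leq Mf(y)\leq\la$ since $y\in\tilde Q$.

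The second step is to note that, provided $4^n\la$ exceeds $(C_n+1)\la$ (a matter of tuning the Whitney construction so that the constant $C_n$ above is at most $4^n-1$, or equivalently of reinterpreting $4^n$ as a generic sufficiently large dimensional constant $K_n$), the hypothesis $Mf(x)>4^n\la$ forces
\begin{equation*}
M\big((f-f_{\tilde Q})\car{\tilde Q}\big)(x) \geq c_n\la.
\end{equation*}
On the additional event $\{M^\sharp f(x)\leq \gamma\la\}$ this becomes $M\big((f-f_{\tilde Q})\car{\tilde Q}\big)(x)/M^\sharp f(x)\geq c_n/\gamma$. Applying Theorem~\ref{Teoremanodiadico} on the cube $\tilde Q$ at level $c_n/\gamma$ then yields
\begin{equation*}
\big|\{x\in Q:\,Mf(x)>4^n\la,\ M^\sharp f(x)\leq \gamma\la\}\big| \leq C\, e^{-c/\gamma}\,|\tilde Q| \leq C'\, e^{-c/\gamma}\,|Q|,
\end{equation*}
which is the claim.

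The main obstacle is the dimensional bookkeeping involved in the Whitney splitting, namely making sure the precise threshold $4^n$ in the statement is compatible with the constant $C_n$ arising from the non-local part of $Mf(x)$; in practice this is handled by a careful choice of the Whitney parameters, and the range $\gamma\geq \gamma_0$ is anyway trivial since $e^{-c/\gamma}$ is bounded below. No further machinery beyond Theorem~\ref{Teoremanodiadico} is needed.
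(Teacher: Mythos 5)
Your proof is correct and follows essentially the same route as the paper: use the Whitney geometry to reduce the event to a statement about the ratio $M((f-f_{\tilde Q})\car{\tilde Q})/M^\sharp f$ on a comparable enlargement of $Q$ touching $\Omega_\la^c$, then invoke Theorem~\ref{Teoremanodiadico}. The only real difference is one of bookkeeping: the paper invokes the (standard, unproved there) Whitney-decomposition fact that $Mf(x)>4^n\la$ with $x\in Q$ already forces $M(f\car{\overline Q})(x)>4^n\la$, which combined with $|f|_{\overline Q}\leq\la$ gives the lower bound $M((f-f_{\overline Q})\car{\overline Q})(x)>(4^n-1)\la$ exactly at the stated threshold $4^n$; you instead prove from scratch the slightly weaker pointwise bound $Mf(x)\leq M((f-f_{\tilde Q})\car{\tilde Q})(x)+C_n\la$, which may require replacing $4^n$ by a different dimensional constant. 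You correctly note this is harmless (the exact constant is irrelevant both in the proposition and in its use in Theorem~\ref{Yabuta-quantif}), and in fact your more explicit derivation actually supplies the justification the paper's proof leaves implicit.
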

\begin{proof}
Let $\overline Q$ be the multiple of $Q$ such that $\overline Q \cap (\Omega_\la)^c\not = \emptyset$, as in the Whitney decomposition. We prove that if $x\in Q$ satisfies  $Mf(x)>4^n\la$ and $M^\sharp f(x) \leq \gamma \la$ then 
\begin{equation}\label{estimation}
\frac{M((f-f_{\overline Q})\car{\overline Q})(x)}{M^\sharp f(x)}>\frac 1\gamma.
\end{equation}
Then we can directly apply Theorem \ref{Teoremanodiadico} and we  will be done.
Because of the Whitney decomposition, $Mf(x)>4^n \la $ implies $M(f\car{\overline Q})(x)>4^n\la$. Also as a consequence of the Whitney decomposition, $|f|_{\overline Q} \leq \la,$ so
$$4^n\la \leq M(f\car{\overline Q})(x) \leq M((f-f_{\overline Q})\car{\overline Q})+ |f|_{\overline Q} \leq M((f-f_{\overline Q})\car{\overline Q})+\la,$$
which implies $M((f-f_{\overline Q})\car{\overline Q})(x) >\la$. This proves \eqref{estimation}. Therefore we have
\begin{align*}
|\{x\in Q : Mf(x)>4^n \la , &M^\sharp f(x) \leq \gamma \la \}| \\
 &\leq |\{x\in Q : M((f-f_{\overline Q})\car{\overline Q})(x)>4^n \la , M^\sharp f(x) \leq \gamma \la \}| \\
& \leq \left| \left\lbrace x \in \overline Q : \frac{M((f-f_{\overline Q})\car{\overline Q})(x)}{M^\sharp f(x)}> \frac 1\gamma\right\rbrace \right|\\
& \leq c e^{-\frac{1}{c \gamma}} |\overline Q|
\end{align*}
This ends the proof, since $Q$ and $\overline Q$ have comparable size.
\end{proof}

Now we prove Theorem \ref{Yabuta-quantif}. The proof follows mainly the one in \cite{YCF}, but we use the good-$\la$ inequality from Proposition \ref{good-lambda-exp}. We also keep an eye for the dependence on the constant of the weight, which is in fact our main objective.

\begin{proof}[Proof of Theorem \ref{Yabuta-quantif}]
We may assume, arguing as in \cite{YCF}, that both norms are finite. Define $\Omega _k = \{ x \in \R^n: Mf(x) > 2^k\}$ for $k\in \Z$. We write, following the Whitney decomposition theorem
$$\Omega_k = \bigcup_j Q_j^k, \quad Q_j^k \ \text{disjoint cubes}.$$
By Proposition \ref{good-lambda-exp} we have
$$ |\{x\in Q_j^k : Mf(x) > 4^n 2^k , M^\sharp f(x) \leq \gamma \la\}| \leq C e^{-\frac c\gamma}|Q_j^k|,$$
which in turn yields, using Theorem \ref{SHARP-RHI-Cp},
$$ w(\{x\in Q_j^k : Mf(x) > 4^n 2^k , M^\sharp f(x) \leq \gamma \la\}) \leq C e^{-c\frac{\eps}{\gamma}}\int_{\R^n} (M\car{Q_j^k})^q w, $$
where $\eps = \frac{c_n}{ \max(1,[w]_{C_q})}.$ These computations, together with the standard argument that uses the good-$\la$ technique yield
\begin{align*}
\int Mf^p w &
 \leq 2^p  \sum_{k\in \Z} 2^{kp} w(\Omega_k) \\
& \leq (c_n)^p \sum_{k\in \Z} 2^{kp} w(M^\sharp f > \gamma 2^k) + c_n e^{-\frac{c\eps}{\gamma}} \sum_{k,j} 2^{kp} \int_{\R^n} (M\car{Q_j^k})^q w \\
& \leq  \left(\frac {c_n}{\gamma}\right)^p \int M^\sharp f^p w + c_n e^{-\frac{c\eps}{\gamma}} \int (M_{p,q} Mf)^p w,
\end{align*}
where $M_{p,q}$ is the Marcinkiewicz operator as in \cite{CCF}, \cite{SCF}. We now use Lemma 5.8 from \cite{CCF} and obtain
$$ \int  (M_{p,q} Mf)^p w \leq 2^{c_n \frac{pq}{q-p}}\frac1 \eps \log \frac 1\eps  \int Mf^p w.$$
So, if we choose
$$\frac1\gamma = c_{n} \frac{pq}{q-p} \frac1 \eps \log \frac{1}{\eps} = c_{n} \frac{pq}{q-p}  {\max(1, [w]_{C_q}\log^+[w]_{C_q})},$$
we can absorb the last term to the left side and we obtain 
$$ \norm{Mf}_{L^p(w)} \leq c_n \frac{pq}{q-p} \max(1,[w]_{C_q} \log^+[w]_{C_q}) \norm{M^\sharp f}_{L^p(w)}.$$
This finishes the proof.
\end{proof}

Next we provide a proof of Theorem \ref{CpOperator(D)condition}. Recall that we say that an operator $T$ satisfies the \eqref{PROPIEDAD} property if there  are some constants $\delta\in(0,1)$ and $c>0$ such that for all $f$,
\begin{equation}
\label{PROPIEDAD}\tag{D} M^\sharp_\delta (Tf)(x) \leq c Mf(x), \quad a.e. \: x.
\end{equation}
Here $M$ denotes the standard Hardy--Littlewood maximal operator and $M^\sharp _\delta f = M^\sharp(f^\delta)^\frac1	\delta$ 
\begin{proof} [Proof of Theorem \ref{CpOperator(D)condition}]
Since $\frac{p}{\delta}<q$, we can make the following computations:
\begin{align*}
\norm{Tf}_{L^p(w)} & \leq \norm{M_\delta (T f)}_{L^p(w)} 
 = \norm{M(Tf^\delta)}_{L^\frac p\delta (w)}^\frac1\delta \\
& \leq c_{n} \left( \frac{pq}{\delta q-p} \max(1,[w]_{C_q}\log^+[w]_{C_q})\right)^\frac1\delta \norm{M^\sharp(Tf^\delta)}_{L^\frac p\delta (w)}^\frac1\delta \\
& = c_{n} \left( \frac{pq}{\delta q-p}\max(1,[w]_{C_q}\log^+[w]_{C_q}) \right)^\frac1\delta\norm{M_\delta^\sharp(Tf)}_{L^p (w)} \\
& \leq c_{n} \ C_T \ \left(\frac{pq}{\delta q-p}  \ \max(1,[w]_{C_q}\log^+[w]_{C_q}) \right)^\frac1\delta\norm{Mf}_{L^p (w)}.\qedhere
\end{align*}
\end{proof}

\section{Generalized Poincar\'e inequalities: Proof of Theorem \ref{Automejora}}\label{GeneralizedPI}

 In this section we provide a proof of Theorem \ref{Automejora}.   Let $w \in A_\infty$ and let $a$ be a functional satisfying the $D_r(w)$ condition for some $r>1$. More precisely, for every cube $Q$ and every collection $\Lambda$ of pairwise disjoint subcubes of $Q$, the following inequality has to hold:
\begin{equation*}
\sum_{P\in \Lambda} w(P) \, a(P)^r \leq \|a\|^r w(Q) a(Q)^r.
\end{equation*}

This kind of functionals were studied in relation with self improvement properties of Poincar\'e and Poincar\'e--Sobolev inequalities in \cite{PR-Poincare}. Here we present a result similar to the ones obtained there.

\begin{proof} [Proof of Theorem \ref{Automejora}]   
Fix a cube $Q$. We have to prove that for every $t>0$,
\begin{equation}
t^r \, w(\{x\in Q: |f(x)-f_Q|>t\})\leq (c_n \|a\| [w]_{A_\infty})^r \, a(Q)^r \, w(Q),
\end{equation}
with $c_n$ independent from everything but the dimension.

 $M_Q$ will denote the dyadic maximal operator localized in $Q$. Since $(f-f_Q) \leq M_Q (f-f_Q),$ we can just estimate the bigger set $\Omega_t = \{x\in Q: M_Q(f-f_Q)(x)>t\}$. Let $Q_j$ be the maximal cubes that form $\Omega_t$. They can be found by  the Calder\'on--Zygmund decomposition. Let $q=2^n+1$ as in \cite{PR-Poincare}, and let us make the same computations that they do. We arrive to
$$w(\Omega_{qt}) \leq \sum_j w(E_{Q_j}),$$
where 
\begin{align*}
E_{Q_j} &= \{ x\in Q_j: M_Q(f-f_{Q_j})(x)>t\} \\
&= \{ x\in Q_j: M_{Q_j}(f-f_{Q_j})(x)>t\} ,
\end{align*}
by the maximality of the cubes $Q_j$. Now we will use the good-$\la$ from Corollary \ref{COROLARIO}. We use the version with the dyadic sharp maximal function in Remark \ref{REMARK-DIADICO}. Let $\gamma>0$ to be chosen later. Then
\begin{align*}
E_{Q_j}&  \subseteq \{M_{Q_j}(f-f_{Q_j})>t, \, M_d^\sharp f \leq \gamma t\}  \cup \{  M_d^\sharp f > \gamma t\}  = A_j \cup B_j
\end{align*}
and therefore
\begin{align*}
w(E_{Q_j} )  &\leq w(A_j) + w(B_j).
\end{align*}

For $A_j$-s, let $s>1$ be the exponent for the Reverse H\"older inequality for $w\in A_\infty$ as in Theorem \ref{SHARP-RHI-Ainfty}. Then, using Corollary \ref{COROLARIO}, we have
\begin{align*}
\sum_j w(A_j) \leq c_1 e^{-\frac{c_2}{ s \gamma} } \sum_j w(Q_j) = c_1 e^{-\frac{c_2}{ s \gamma} } w(\Omega_t).
\end{align*}
Remember that $c_1,c_2>0$ are dimensional constants. On the other hand, for $B_j$ we can argue as follows. We have
\begin{align*}
\bigcup_j B_j \subseteq \{x\in Q: M_d^\sharp f(x)>\gamma t\}=\bigcup_i R_i,
\end{align*}
where $R_i$ are the maximal dyadic subcubes of $Q$ such that 
$$
 \gamma t < \frac{1}{|R_i|}\int_{R_i}|f-f_{R_i}| .
$$
Now, using the starting point \eqref{starting-point}, we clearly have 
$$ \gamma t \leq a(R_i).$$
Therefore, using that $a$ satisfies the $D_r(w)$ condition, we have
\begin{align*}
\sum_j w(B_j) & \leq w\big( \{x\in Q: M_d^\sharp f(x)>\gamma t\} \big) \\
& = \sum_i w(R_i)\\
& \leq \left( \frac{1}{\gamma t}\right)^r\sum_i w(R_i) a(R_i)^r \\
& \leq \|a\|^r \left( \frac{1}{\gamma t}\right)^r w(Q) a(Q)^r.
\end{align*}

Now, if we put everything together, we get
\begin{align*}
(qt)^r w(\Omega_{qt}) & \leq c_1 (tq)^r e^{-\frac{c_2}{\gamma s}}w(\Omega_t) + \left(q  \frac{\|a\|}{\gamma}\right)^r  w(Q) a(Q)^r.
\end{align*}
Since we have $qt$ on the left and $t$ on the right, we define the function \[ \varphi(N) = \sup_{0<t\leq N} t^r w(\Omega_t).\] This function is increasing, so we have
$$\varphi(N) \leq \varphi(Nq) \leq c_1 q^r e^{-\frac{c_2}{\gamma s}}\varphi(N) + \left(q  \frac{\|a\|}{\gamma}\right)^r  w(Q) a(Q)^r.$$
The parameter $\gamma$ is free, and we make the choice so that 
$$ c_1 q^r e^{-\frac{c_2}{s\gamma}}=\frac 12,$$
which means $$\gamma = \frac{c_n}{r [w]_{A_\infty}}.$$
This yields the result, since $\|f-f_Q \|_{L^{r,\infty}(Q,w)}\leq \sup_N\varphi(N)$.
\end{proof}

\section{Further extensions: polynomial approximation}\label{polynomial}

In this section we generalize Theorems \ref{Teorema1} and  \ref{bumped-muckenhoupt-wheeden} to the context of polynomials. More precisely, we show that the average $f_Q$ can be replaced with an appropriate polynomial $P_Q f$ of fixed degree $k$. It is not clear how to obtain this polynomial approximation from the sparse techniques in \cite{OPRRR19}.

Let $\mathcal P_k(Q)$ denote the space of polynomials of degree at most $k$ restricted to the cube $Q$, and let $m_k$ denote the dimension of $\mathcal P_k(Q)$, which depends only on $k$ and $n$. The degree $k$ will be frozen from now on, so we omit the subscript $k$ if there is no room for confusion.

We are going to work with the $L^2(Q)$ space with normalized measure, namely we consider the standard product $\langle f,g\rangle_Q = \avgint_Q f\overline g$. First, we have to construct an orthonormal basis of $\mathcal P(Q)$. We choose any orthonormal basis of $\mathcal P( [0,1]^n)$, namely $\{ e_j\}_{j=1}^{m_k}$. For a general cube $Q = y + \ell [0,1]^n$, we choose the basis formed by 
\[
e_{j,Q}(x) = e_j\big( \frac{x-y}{\ell} \big).
\]
Note that $\{e_{j,Q}\}_j$ is indeed an orthonormal basis because the measure in $Q$ is normalized. Moreover, for a fixed degree $k$, all the basis vectors have uniformly bounded $L^\infty$ norm for every cube $Q.$ If we were to increase the degree $k$, we would just have to introduce new vectors to the basis. We define the orthogonal projection operator
\[ \begin{array}{r c l}
P_Q: L^1(Q)  & \longrightarrow & \mathcal P(Q) \\
f & \longmapsto & \sum_{j=0}^{m_k} \langle f , e_{j,Q} \rangle_Q \ e_{j,Q} .
\end{array} 
\]

Notice then that that the projection operator is indeed defined in the whole $L^1(Q)$ because the $e_{j,Q}$ are polynomials and therefore they belong to $L^\infty$. Using the fact that the vectors $\{e_{j,Q}\}$ are uniformly bounded, one can prove
\begin{equation}\label{property-boundedness}
|P_Qf(x)| \leq \gamma \ \avgint_Q |f|,
\end{equation}
for any $f\in L^1(Q)$, and where $\gamma$ is a constant depending only on the dimension $n$ and on~$k$. 

Combining these properties we can show the following optimality property, when $p\in[1,\infty)$:
\begin{equation*}
\inf_{\pi\in\mathcal{P}_k}\left (\avgint_Q |f-\pi|^p\right) ^{1/p} \approx \left (\avgint_Q|f-P_Qf|^p \right )^{1/p}.
\end{equation*}
Indeed, the inequality in the direction ``$\le$'' is  
trivial. To prove the opposite inequality, observe that since $P_{Q}$
is a projection we have  
$P_{Q}\pi =\pi$ for any polynomial of degree at most $k$, and
therefore  by the triangle inequality
\begin{equation*}
\left (\avgint_Q|f-P_Qf|^p \right )^{1/p}\leq   \left (\avgint_Q|f-\pi |^p \right )^{1/p}+ \left (\avgint_Q| P_Q(f-\pi)|^p \right )^{1/p}
\end{equation*}
\begin{equation*}
\leq   (1+\gamma) \left (\avgint_Q |f-\pi|^p \right )^{1/p}
\end{equation*}
by \eqref{property-boundedness}.

Before we state the main results of this section, we introduce the sharp maximal function in this polynomial  context, which has the expected form
\[M^\sharp_kf(x) = \sup_{Q \ni x} \frac{1}{|Q|} \int_Q |f-P_Q f|\]
where $k$ is the degree and where $P_Q f$ is the optimal polynomial mentioned above of degree less than $k$. The case $k=0$ corresponds to the usual sharp maximal function.
We first state the maximal polynomial theorem:

\begin{theorem}\label{Teorema1withPolynomials}
Let $f\in L^1_{loc}$, $Q$ a cube, $1<r<\infty$ and $1\leq p<\infty$. Then
\[
 \left( \frac{1}{w_r(Q)} \int_Q \Big( \frac{M_Q(f-P_Qf)(x)}{M_k^\sharp f(x)}\Big)^p w(x) dx \right)^\frac{1}{p} \leq c_n \ r' \  \gamma \ p.
 \]
\end{theorem}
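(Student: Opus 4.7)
The plan is to mirror the proof of Theorem \ref{Teorema1}, replacing the constant $f_Q$ by the polynomial projection $P_Q f$ and using the $L^\infty$--$L^1$ bound \eqref{property-boundedness} to control the new cross term $P_{Q_j}f - P_Q f$. Fix a dyadic cube $Q$ and perform a local dyadic Calderón--Zygmund decomposition on $Q$ at height $\lambda > 1$ of the function $F(x) = |f(x) - P_Q f(x)|/\mathrm{osc}_k(f,Q)$, where $\mathrm{osc}_k(f,Q) := \avgint_Q |f - P_Q f|$. This produces maximal pairwise disjoint dyadic descendants $\{Q_j\}$ with the expected three properties: (a) $\lambda\,\mathrm{osc}_k(f,Q) < \avgint_{Q_j}|f - P_Q f| \leq 2^n\lambda\,\mathrm{osc}_k(f,Q)$; (b) $\sum_j |Q_j| \leq |Q|/\lambda$; and (c) $M_Q(f - P_Q f)(x) \leq \lambda\,\mathrm{osc}_k(f,Q)$ for a.e. $x\notin \cup_j Q_j$. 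The key observation for the denominator is that $Q$ itself is admissible in the supremum defining $M_k^\sharp f(x)$, hence $\mathrm{osc}_k(f,Q) \leq M_k^\sharp f(x)$ for every $x \in Q$.

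For $x \in Q_j$, maximality gives $M_Q(f - P_Q f)(x) = M_{Q_j}(f - P_Q f)(x)$, and splitting $f - P_Q f = (f - P_{Q_j} f) + (P_{Q_j} f - P_Q f)$ reduces the analysis to controlling the polynomial error $P_{Q_j}f - P_Q f$. Since $P_Q f$ restricted to $Q_j$ is itself a polynomial of degree $\leq k$ on $Q_j$ and $P_{Q_j}$ fixes such polynomials, we have the crucial identity $P_{Q_j}f - P_Q f = P_{Q_j}(f - P_Q f)$ on $Q_j$. Applying \eqref{property-boundedness} to $f - P_Q f$ on $Q_j$ and then using property (a),
\[
\|P_{Q_j}f - P_Q f\|_{L^\infty(Q_j)} \leq \gamma\avgint_{Q_j}|f - P_Q f| \leq 2^n \gamma \lambda\,\mathrm{osc}_k(f,Q).
\]
Combining with property (c) and $\mathrm{osc}_k(f,Q) \leq M_k^\sharp f(x)$ yields the pointwise estimate, valid a.e. on $Q$,
\[
\frac{M_Q(f - P_Q f)(x)}{M_k^\sharp f(x)} \leq C_n \gamma \lambda + \sum_j \frac{M_{Q_j}(f - P_{Q_j} f)(x)}{M_k^\sharp f(x)}\,\car{Q_j}(x).
\]

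Taking $L^p(w)$ norms normalized by $w_r(Q)^{1/p}$, applying Minkowski's inequality and the disjointness of $\{Q_j\}$, and using Hölder's inequality together with property (b) to obtain $\sum_j w_r(Q_j) \leq w_r(Q)\,\lambda^{-1/r'}$ exactly as in the proof of Theorem \ref{Teorema1}, then passing to the supremum
\[
X := \sup_{R\in\mathcal{D}}\left(\frac{1}{w_r(R)}\int_R\left(\frac{M_R(f - P_R f)}{M_k^\sharp f}\right)^p w\right)^{1/p},
\]
we arrive at the self-improvement inequality $X \leq C_n \gamma \lambda + X\,\lambda^{-1/(pr')}$ valid for every $\lambda > 1$. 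Assuming $X<\infty$, minimizing in $\lambda$ via Lemma \ref{minimization} produces $X \leq c_n \gamma\, p\, r'$. The a priori assumption $X < \infty$ is removed by the same truncation device used in the proof of Theorem \ref{Teorema1}, working with a truncation $f_K$ and regularized denominator $M_k^\sharp(f_K) + \varepsilon$ and then passing to the limit by monotone convergence. The only real obstacle is the localization step: one must recognize the identity $P_{Q_j}f - P_Q f = P_{Q_j}(f - P_Q f)$ on $Q_j$ so that \eqref{property-boundedness} converts the averaged Calderón--Zygmund control into a uniform $L^\infty$ bound on the polynomial error; once this is noted, the scalar argument carries over essentially verbatim, with the polynomial degree entering the constants only through the dimensional factor $\gamma$.
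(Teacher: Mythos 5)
Your proposal follows the paper's argument essentially verbatim: the same localized dyadic Calderón--Zygmund decomposition of $|f-P_Qf|/\osc_k(f,Q)$, the same localization by maximality, the crucial identity $P_{Q_j}f - P_Q f = P_{Q_j}(f - P_Q f)$ combined with \eqref{property-boundedness} and the $L^\infty$-boundedness of $M_{Q_j}$ to uniformly control the polynomial cross-term, the same Hölder/CZ estimate $\sum_j w_r(Q_j)\le w_r(Q)\,\lambda^{-1/r'}$ producing the self-improvement inequality, Lemma \ref{minimization}, and the same truncation device. The proof is correct and matches the paper's.
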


We now state the polynomial version of Theorem \ref{bumped-muckenhoupt-wheeden}. We introduce the weighted polynomial $\BMO$ norm, that is, for a certain weight $w$ we define 
\[ \|f \|_{\BMO_k^r(w)}:= \sup_{Q} \frac{1}{w_r(Q)} \int_Q |f-P_Qf|  . \]

\begin{theorem}\label{ThmBloomBMOwithPolynomials} 
Let $1<p<\infty$ and $r> 1$. Let $w$ a weight and $f$ a function satisfying $[w]_{A_p^r}<\infty$ and $\|f\|_{\BMO_k^r(w)} <\infty$. Then  
\[
\left(\frac{1}{w_r(Q)}\int_{Q}\left|\frac{f(x)-P_Qf}{w(x)}\right|^{p'}w(x)dx\right)^{\frac{1}{p'}}\leq c_{n}\, \gamma \, p'\, \big([w]_{A^r_{p}}\big)^{\frac{1}{p}}\,\big(r'\big)^{\frac{1}{p'}}\,\|f\|_{\BMO_{k}^r(w)} .
\]
\end{theorem}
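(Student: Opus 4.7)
The plan is to mirror the proof of Theorem \ref{bumped-muckenhoupt-wheeden}, replacing the average $f_Q$ throughout by the polynomial projection $P_Q f$. Normalize so that $\|f\|_{\BMO_k^r(w)} = 1$. For $L > 1$ to be optimized later, select the family $\{Q_j\}$ of maximal dyadic subcubes of $Q$ with
\[
\frac{1}{w_r(Q_j)}\int_{Q_j} |f - P_Q f| > L.
\]
By maximality, $\frac{1}{w_r(Q_j')}\int_{Q_j'}|f-P_Qf| \leq L$ for the dyadic parent $Q_j'$, we have $|f(x) - P_Q f(x)| \leq L\, w(x)$ for a.e.\ $x \notin \cup_j Q_j$, and $\sum_j w_r(Q_j) \leq w_r(Q)/L$.

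The crucial algebraic observation is that $P_Q f$ is a polynomial of degree at most $k$, so its restriction to any $Q_j \subset Q$ lies in $\mathcal{P}_k(Q_j)$; since $P_{Q_j}$ is an orthogonal projection onto $\mathcal{P}_k(Q_j)$, it fixes $P_Q f|_{Q_j}$. Therefore, on $Q_j$,
\[
P_{Q_j} f - P_Q f \;=\; P_{Q_j}(f - P_Q f),
\]
and by the uniform boundedness property \eqref{property-boundedness} together with the CZ parent estimate,
\[
|P_{Q_j} f(x) - P_Q f(x)| \leq \gamma \avgint_{Q_j}|f-P_Q f| \leq 2^n\gamma L \Big(\avgint_{Q_j'} w^r\Big)^{1/r}, \qquad x \in Q_j.
\]
This plays the exact role of the bound on $|f_Q - f_{Q_j}|$ in the proof of Theorem \ref{bumped-muckenhoupt-wheeden}, with only an extra factor of $\gamma$.

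With this estimate in hand, I would write
\[
f - P_Q f = (f - P_Q f)\chi_{(\cup_j Q_j)^c} + \sum_j (P_{Q_j} f - P_Q f)\chi_{Q_j} + \sum_j (f - P_{Q_j} f)\chi_{Q_j}
\]
and apply the triangle inequality in $L^{p'}(Q, w\,dx/w_r(Q))$ to split the norm into three pieces $A_1, A_2, B$. The first is bounded by $L$ from the pointwise estimate off the bad set. The second is bounded by $2^n \gamma\, L\, [w]_{A_p^r}^{1/p} (r')^{1/p'}$ by repeating verbatim the argument in the proof of Theorem \ref{bumped-muckenhoupt-wheeden}, namely inserting the bumped $A_p$ constant and then controlling $\sum_j |Q_j'|(\avgint_{Q_j'} w^r)^{1/r}$ by Kolmogorov's inequality applied to the weak type $(1,1)$ estimate for the local dyadic maximal operator. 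The third term is self-referential: setting
\[
X := \sup_R \Big(\frac{1}{w_r(R)}\int_R \Big(\frac{|f - P_R f|}{w}\Big)^{p'} w\Big)^{1/p'},
\]
one obtains $B \leq X \, L^{-1/p'}$ using $\sum_j w_r(Q_j) \leq w_r(Q)/L$.

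Combining the three bounds yields $X \leq c_n \gamma [w]_{A_p^r}^{1/p}(r')^{1/p'} L + X\, L^{-1/p'}$ for every $L > 1$, and minimizing in $L$ (via Lemma \ref{minimization}) gives the claimed estimate $X \leq c_n \gamma\, p' \, [w]_{A_p^r}^{1/p}(r')^{1/p'}$. As in the earlier proof, the a priori assumption $X < \infty$ is removed by the truncation argument $\min\{|f-P_Q f|/w, m\}$ and letting $m\to \infty$. The only genuinely new ingredient compared with Theorem \ref{bumped-muckenhoupt-wheeden} is the projection identity together with \eqref{property-boundedness}; the main obstacle is essentially this bookkeeping step, since it is what guarantees that the off-diagonal polynomial difference $P_{Q_j}f - P_Q f$ is controlled by the same kind of $w_r$-average that appeared in the scalar case.
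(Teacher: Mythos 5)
Your proposal is correct and follows the same route as the paper's sketch: a mixed Calder\'on--Zygmund selection with respect to $w_r$, the projection identity $P_{Q_j}f - P_Qf = P_{Q_j}(f-P_Qf)$ combined with the uniform $L^\infty$ bound \eqref{property-boundedness} to replace the scalar estimate on $|f_{Q_j}-f_Q|$, and then the same $A_1+A_2+B$ split, Kolmogorov argument, and self-referential minimization in $L$. In fact you supply the projection identity explicitly (the paper only invokes it in the proof of Theorem~\ref{Teorema1withPolynomials} and leaves it implicit in this sketch), so your version is, if anything, slightly more complete.
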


\begin{remark}
One can also obtain $A_\infty$ results analogous to Corollaries \ref{COROLARIO} and \ref{ThmBloomBMO}.
\end{remark}

Since the proofs of these theorems are very similar to the zero degree case but making only the appropriate changes, we are only going to give the proof of Theorem \ref{Teorema1withPolynomials}, and just a sketch of the proof of Theorem \ref{ThmBloomBMOwithPolynomials}.

\begin{proof}[Proof of Theorem \ref{Teorema1withPolynomials}]
 Fix $L>1$ and make the Calder\'on--Zygmund decomposition of the function 
\[ F(x) =  \frac{|f(x)-P_Qf(x)|}{\osc_k(f,Q)},\]

where now \[ \osc_k(f,Q)= \avgint_Q|f-P_Qf|.\]

We obtain cubes $\{Q_j\}$ that satisfy:
\begin{itemize}
\item $ \displaystyle L < \avgint_{Q_j} \frac{|f(x)-P_Qf(x)|}{\osc_k(f,Q)} \leq 2^n L$
\item $\displaystyle \sum_{Q_j} |{Q_j}| \leq \frac{|Q|}{L}$
\item for any $x\not \in \bigcup {Q_j},$ it holds $\displaystyle \frac{|f(x)-P_Qf(x)|}{\osc_k(f,Q)} \leq L$
\end{itemize}

Fix one of these cubes ${Q_j}$ and let $x\in {Q_j}$. We can localize by maximality, meaning:
\begin{align*}
M_Q(f-P_Q)(x) = M_{Q_j}(f-P_Qf)(x) \leq M_{Q_j}(f-P_{Q_j}f)(x) + M_{Q_j}(P_{Q_j}f-P_Qf)(x).
\end{align*}
Now, $P_{Q_j}f-P_Qf$ is not constant, but we can bound it anyway. Indeed, since both are polynomials of degree at most $k$, ${Q_j}\subset Q$ and both $P_{Q_j}$ and $P_Q$ are projection operators, we have 
\[ P_{Q_j}f -P_Qf = P_{Q_j}f -P_{Q_j}(P_Qf) = P_{Q_j}(f-P_Qf)\]
Therefore, using \eqref{property-boundedness} we get
\[ |P_{Q_j}f(x)-P_Qf(x) | \leq | P_{Q_j}(f-P_Q)(x) | \leq \gamma \avgint_{Q_j} |f-P_Qf| \leq 2^n \ L \ \gamma \ \osc_k(f,Q).\]
And, since the maximal function is bounded in $L^\infty$ with norm one, we directly have 
\[ M_{Q_j}(P_{Q_j} f-P_Qf)(x)  \leq \gamma 2^n L\ \osc_k(f,Q). \]
Now, this means that for $x\in {Q_j},$
\[ \frac{M_Q(f-P_Qf)(x)}{M_k^\sharp f(x) } \leq 2^n L \gamma + \frac{M_{Q_j}(f-P_{Q_j}f)(x)}{M_k^\sharp f(x) }\]
and therefore

\begin{align*}
\left( \frac{1}{w_r(Q)} \int_Q \Big( \frac{M_Q(f-P_Qf)}{M_k^\sharp f }\Big)^p w \right)^\frac 1p 
& \leq 2^n L \gamma  \\
& \: \: +   \left( \frac{1}{w_r(Q)}\sum_{j}  \frac{w_r(Q_j)}{w_r(Q_j)} \int_{Q_j} \Big( \frac{M_{Q_j}(f-P_{Q_j}f)}{M^\sharp f } \Big)^p w \right)^\frac 1p \\
& \leq 2^n \gamma L + \frac{X}{L^\frac1p}.
\end{align*}
From here, the result follows as in the proof of Theorem \ref{Teorema1}.
\end{proof}

\begin{proof}[Sketch of the proof of Theorem \ref{ThmBloomBMOwithPolynomials}]
We fix $L>1$ and make the mixed-type Calder\'on--Zygmund decomposition at height $L$ of the function \[ |f-P_Qf|.\] That is, we select the maximal cubes $\{ Q_j \}$ that satisfy
\[
\frac{1}{w_r(Q_j)} \int_{Q_j} |f-P_Qf| >L.
\] 
As in the proof of Theorem \ref{Teorema1withPolynomials}, these cubes will satisfy
\begin{itemize}
\item $\sum w_r(Q_j) \leq \frac{w_r(Q)}{L}$
\item For a.e. $x\in Q_j$, $ |P_Qf(x)-P_{Q_j}f(x)| \leq 2^n \ L \ \gamma \frac{w_r(Q_j')}{|Q_j'|},$ where $Q_j'$ is the parent of $Q_j$
\item $|f(x)-P_Qf(x)| \leq Lw(x)$ almos everywhere outside of $\cup Q_j$
\end{itemize}
Therefore, one can compute as before
\begin{align*}
\left( \frac1{w_r(Q)} \int_Q \Big( \frac{f-P_Qf}{w}\Big)^q w \right)^\frac1q & \leq 
\left( \frac{1}{w(Q)}\int_{(\cup Q_j)^c} L w \right)^\frac1q \\
&\quad  + \left( \frac{1}{w_r(Q)} \sum_j \int_{Q_j} |P_{Q_j}f-P_Qf|^q w^{1-q}\right)^\frac1q \\
& \quad +  \left(\frac{1}{w_r(Q)} \sum_j \int_{Q_j} \Big( \frac{|f-P_{Q_j}f|}{w}\Big)^q w \right)^\frac1q \\
&= A_1+ A_2+ B.
\end{align*}
Clearly, $A_1\leq L$ and $B \leq \frac{X}{L^{\frac{1}{q}}}$, where
\[
X = \sup_{R \in \mathcal D(Q)} \left( \frac1{w_r(R)} \int_R \Big( \frac{f-P_Rf}{w}\Big)^q w \right)^\frac1q .
\]
In order to bound $A_2$ we can argue as in the proof of Theorem \ref{bumped-muckenhoupt-wheeden} but using the new properties of the Calder\'on--Zygmund cubes to get
\[
A_2 \leq 2^n \ \gamma \ L \big(r'\big)^\frac{1}{q} \ \big( [w]_{A_{q'}^r} \big)^{\frac1{q'}}. 
\]
The proof follows as in the proof of Theorem \ref{bumped-muckenhoupt-wheeden}.
\end{proof}

\appendix

\section{The John--Nirenberg theorem}

In this appendix, we give a proof of the John--Nirenberg theorem that seems to have been overlooked in the literature.
We include the proof here for convenience of the reader.

\begin{theorem}
Let $f\in \BMO$ and $Q$ a cube. Then, for some dimensional constant $c_n$,
\begin{equation*}
|\{ x\in Q: |f(x)-f_Q|>t\} | \leq 2\, e^{\textstyle -\frac{c_n\, t}{\|f\|_{\BMO}}} |Q|.
\end{equation*}
\end{theorem}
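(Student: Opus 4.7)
The strategy is to deduce this classical estimate as a direct corollary of Theorem \ref{Teorema1}, combined with the exponential integrability lemma (Proposition \ref{EXPONENTIAL}). The proof will not require any new Calder\'on--Zygmund iteration: all the hard work has been done in Section \ref{PruebaTeorema1}. The three ingredients are (i) the $L^p$ estimate for the localized ratio from Theorem \ref{Teorema1}, (ii) two elementary pointwise comparisons, and (iii) the passage from polynomial growth in $p$ of the $L^p$ norms to exponential decay of the distribution function.

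Concretely, I would first apply Theorem \ref{Teorema1} with $w\equiv 1$ and the particular choice $r=2$ (so that $w_r(Q)=|Q|$ and $r'=2$). This yields, for every $1\le p<\infty$ and every cube $Q$,
\[
\left(\frac{1}{|Q|}\int_Q \left(\frac{M_Q(f-f_Q)(x)}{M^\sharp f(x)}\right)^p dx\right)^{1/p} \le 2c_n\, p.
\]
Next I would invoke two standard facts that turn this into a bound on $|f-f_Q|/\|f\|_{\BMO}$. First, directly from the definition of the sharp maximal function, one has $M^\sharp f(x)\le \|f\|_{\BMO}$ for almost every $x$ whenever $f\in\BMO$. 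Second, for almost every $x\in Q$ Lebesgue differentiation applied along the decreasing chain of dyadic descendants of $Q$ containing $x$ gives
\[
|f(x)-f_Q| \;=\; \lim_{k\to\infty} \Bigl|\avgint_{Q^{(k)}}(f-f_Q)\Bigr| \;\le\; M_Q(f-f_Q)(x).
\]
Combining these two observations with the displayed inequality produces the key $L^p$ bound
\[
\left(\frac{1}{|Q|}\int_Q |f(x)-f_Q|^p dx\right)^{1/p} \le 2c_n\, p\, \|f\|_{\BMO}.
\]

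Finally, I would apply Proposition \ref{EXPONENTIAL} on the probability space $(Q,dx/|Q|)$ to the function $F(x)=|f(x)-f_Q|$ with the parameter $\gamma = 2c_n\|f\|_{\BMO}$. This directly yields
\[
\frac{|\{x\in Q: |f(x)-f_Q|>t\}|}{|Q|} \;\le\; 2\,\exp\!\Bigl(-\tfrac{t}{8c_n\|f\|_{\BMO}}\Bigr),
\]
which is precisely the desired statement after renaming the dimensional constant. There is essentially no obstacle here: the only delicate point is that Theorem \ref{Teorema1} gives a bound \emph{linear} in $p$ (the factor $pr'$), which is exactly what is needed to trigger Proposition \ref{EXPONENTIAL}; a cruder $L^p$ bound would only yield integrability of $|f-f_Q|$ to finite powers, not exponential integrability. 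So conceptually the entire John--Nirenberg phenomenon is packaged inside the linear-in-$p$ constant in Theorem \ref{Teorema1}, and the present appendix merely unpacks it.
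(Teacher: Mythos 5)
Your proof is correct, but it takes a different route from the one the paper actually gives in the Appendix. The paper's proof of this theorem goes through Proposition~\ref{proposicion-JN}, which is a \emph{direct} Calder\'on--Zygmund argument yielding $\big(\avgint_Q |f-f_Q|^p\big)^{1/p}\le c_n\,p\,\|f\|_{\BMO}$ from scratch, using the same ``$X\le A + X L^{-1/p}$, optimize over $L$'' iteration that appears throughout the paper; the point of the Appendix is precisely to exhibit this self-contained, light-weight proof of John--Nirenberg that does \emph{not} pass through the heavier Theorem~\ref{Teorema1}. You instead take $w\equiv 1$, $r=2$ in Theorem~\ref{Teorema1}, feed in the two pointwise comparisons $M^\sharp f\le\|f\|_{\BMO}$ and $|f-f_Q|\le M_Q(f-f_Q)$ a.e.\ (the latter by dyadic Lebesgue differentiation), and then apply Proposition~\ref{EXPONENTIAL}. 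This is logically sound and not circular (Theorem~\ref{Teorema1} does not use John--Nirenberg), and in fact the authors explicitly mention this deduction in the Introduction, in the remarks following Corollary~\ref{COROLARIO}; what the Appendix buys by proving Proposition~\ref{proposicion-JN} directly is an argument that never invokes the sharp maximal function and makes the Appendix independent of Section~\ref{PruebaTeorema1}. One small point worth flagging: you should note that if $f$ is constant on $Q$ the inequality is trivial, and otherwise $M^\sharp f>0$ everywhere, so the ratio in Theorem~\ref{Teorema1} is well-defined.
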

\begin{proof}
We just need to combine Proposition \ref{proposicion-JN} with the exponential estimate from Proposition \ref{EXPONENTIAL}.
\end{proof}

\begin{proposition}\label{proposicion-JN}
Let $f\in \BMO$. Then for every cube $Q$ and $p\geq1$,
\begin{equation*}\label{basic-self-improving}
\left(\frac{1}{|Q|}\int_Q |f-f_Q| ^p \right)^\frac{1}{p} \leq c_n \, p \, \|f\|_{\BMO}.
\end{equation*}
\end{proposition}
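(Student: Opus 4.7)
The plan is to mirror the self-improvement argument used in the proof of Theorem \ref{Teorema1}, with the sharp maximal function $M^\sharp f$ replaced by the constant $\|f\|_{\BMO}$. By homogeneity I may assume $\|f\|_{\BMO}=1$. Fix a cube $Q$ and a parameter $\lambda>1$ to be chosen later, and perform the local dyadic Calder\'on--Zygmund decomposition of $|f-f_Q|$ on $Q$ at height $\lambda$. This produces a pairwise disjoint family $\{Q_j\}$ of maximal dyadic subcubes of $Q$ with $\lambda < \avgint_{Q_j}|f-f_Q| \leq 2^n\lambda$, with total volume $\sum_j |Q_j| \leq |Q|/\lambda$, and with the pointwise bound $|f(x)-f_Q| \leq \lambda$ valid for a.e.\ $x \notin \cup_j Q_j$. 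The first property together with Jensen's inequality also gives $|f_{Q_j}-f_Q| \leq 2^n\lambda$, playing the role of \eqref{C-Zlocalizado}.

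Define
\[
X := \sup_R \left(\avgint_R |f-f_R|^p\right)^{1/p},
\]
where the supremum is over all cubes $R$ (or, once truncation is in place, over dyadic subcubes of $Q$). Split the $L^p$ average on $Q$ into the contributions from $Q\setminus \cup_j Q_j$ and from each $Q_j$. On the good set I use the pointwise bound $|f-f_Q|\leq \lambda$; on each $Q_j$ I apply the triangle inequality in $L^p(Q_j, dx/|Q_j|)$ to replace $f_Q$ by $f_{Q_j}$ at a cost of $2^n\lambda$, thereby bounding the local $L^p$ average by $X + 2^n\lambda$. The subadditivity of $t\mapsto t^{1/p}$ together with the packing estimate $\sum_j |Q_j| \leq |Q|/\lambda$ then yields
\[
\left(\avgint_Q |f-f_Q|^p\right)^{1/p} \leq \lambda + (X + 2^n\lambda)\,\lambda^{-1/p}.
\]

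Passing to the supremum over $R$ and rearranging gives $X(1-\lambda^{-1/p}) \leq (1+2^n)\lambda$, whence
\[
X \leq (1+2^n)\,\lambda\,\frac{\lambda^{1/p}}{\lambda^{1/p}-1},
\]
and Lemma \ref{minimization} applied with $\alpha = 1/p$ delivers $X \leq c_n\,p$ after optimizing in $\lambda>1$, which is the claim.

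The main obstacle is the customary a priori finiteness of $X$, needed in order to absorb $X\lambda^{-1/p}$ to the left-hand side. This is handled exactly as in the proof of Theorem \ref{Teorema1}: replace $f$ by a bounded truncation $f_K$ (with $|f_K|\le K$), observe that $f_K$ is 1-Lipschitz in $f$ so $\|f_K\|_{\BMO} \leq 2\|f\|_{\BMO}$ and the corresponding $X_K$ is manifestly finite, run the argument uniformly in $K$ to obtain $X_K\le c_n p$, and conclude by monotone convergence as $K\to\infty$.
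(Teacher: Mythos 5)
Your proof is correct and mirrors the paper's own argument step for step: the local dyadic Calder\'on--Zygmund decomposition of $|f-f_Q|$ at a free height, the triangle-inequality shift from $f_Q$ to $f_{Q_j}$ costing $2^n\lambda$, the packing bound $\sum_j|Q_j|\le|Q|/\lambda$ to produce the factor $\lambda^{-1/p}$, the absorption of $X$ via Lemma~\ref{minimization}, and the truncation to ensure $X<\infty$ a priori. The only cosmetic difference is that you bound the contribution of each $Q_j$ directly and then use subadditivity of $t\mapsto t^{1/p}$, whereas the paper first writes $f-f_Q=A_1+A_2+B$ and applies Minkowski's inequality; the estimates obtained are the same up to a harmless constant.
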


\begin{remark}
Usually this result is presented ad a corollary of the JN theorem.
\end{remark}

\begin{proof}[Proof of Proposition \ref{proposicion-JN}] We may suppose $\|f\|_{\BMO} =1$ by homogeneity. 
Let $L>1$ to be chosen. We make the Calder\'on--Zygmund decomposition of $|f-f_Q|$ in $Q$ at height $L$. We obtain a family $\{Q_j\}$ of dyadic subcubes of $Q$. These cubes are pairwise disjoint with respect to the property $$L < \frac{1}{|Q_j|}\int_{Q_j} |f-f_Q|\leq 2^n L .$$
Moreover, if $x\not \in \cup_j Q_j$, then $|f(x)-f_Q|\leq L$.

Using the disjointness, we have for $a.e. \ x\in Q$,
\begin{align*}
f(x)-f_Q & = (f(x)-f_Q) \car {(\cup_j Q_j)^c} (x) + \sum_{j} (f_{Q_j}-f_Q) \car{Q_j}(x) + \sum_j (f(x)-f_{Q_j})\car{Q_j}(x) \\
&= A_1(x) + A_2(x) + B(x).
\end{align*}


By the Calder\'on--Zygmund decomposition, we have $|A_1| \leq L$ and $|A_2|\leq 2^n L$ a.e., so $|A_1+A_2| \leq 2^n L$ since they have disjoint support.
 Now, for the remaining part, we compute the norm
\begin{align*}
\left( \frac{1}{|Q|} \int_Q |B(x)|^p\right)^\frac 1p & = \left(\frac{1}{|Q|}\sum_j \int_{Q_j} |f(x)-f_{Q_j}|^p \right)^\frac{1}{p} \\
& \leq \sup_{R } \left( \frac{1}{|R|} \int_R |f-f_R|^p\right)^\frac1p \left( \sum_j \frac{|Q|}{|Q_j|}\right)^\frac{1}{p} \\
&\leq \frac{X}{L^\frac{1}{p}},
\end{align*}
where $X$ equals the corresponding supremum, which is taken over all cubes $R$.

Combining the estimates for $A_1,A_2$ and $B$, we have
\begin{equation}\label{Ultima-John-Nirenberg}\left( \frac{1}{|Q|} \int_Q |f(x)-f_Q|^p\right)^\frac 1p \leq 2^n L + \frac{X}{L^\frac{1}{p}}.
\end{equation}
Since \eqref{Ultima-John-Nirenberg} holds for all cubes $Q$ and $L>1$, the right hand side being independent from $Q$, we can take the supremum over all cubes $Q$ and we get
$$X \leq 2^n L + \frac{X}{L^\frac{1}{p}}.$$
Passing the last term to the left, we get 
$$X \leq 2^n \inf_{L>1} L (L^\frac{1}{p})' \leq 2^{n+1} \,e \,p,$$
where in the last inequality we used Lemma \ref{minimization}. But we can only do this if $X<\infty$, which a priori could not be true. Of course, one can use the John--Nirenberg theorem to prove $X \simeq_p \|f\|_{\BMO}$, but since we are providing an alternative proof of John--Nirenberg we cannot do that.

The way of avoiding this problem is by making a truncation of $f$ at height $m>0$. If $f$ is a real function, call $f_m$ the truncation
\begin{equation*}
f(x) = \begin{cases}
-m, & f(x)<-m\\
f(x), & -m\leq f(x)\leq m \\
m, & f(x)>m.
\end{cases}
\end{equation*} 
In the case that $f$ is complex, one can do a similar trick. In any case, it is easy to prove
$$
\frac{1}{|Q|}\int_Q |(f_m)-(f_m)_Q| \leq \frac{2}{|Q|}\int_Q |f-f_Q|
$$
If we work with the functions $f_m$ instead of $f$, arguing in the same way
$$ X_m \leq 2^n L + \frac{X_m}{L^\frac{1}{p}}.$$
But now, $X_m \leq 2m<\infty$, so the rest of the proof can continue. The last step is to let $m\rightarrow  \infty$ with the help of Monotone Convergence.
\end{proof}

\bibliographystyle{amsalpha}

\end{document}